\documentclass[11pt,reqno]{amsart}

\usepackage[utf8]{inputenc}
\usepackage[T1]{fontenc}
\usepackage{amsmath, amssymb, amsthm}
\usepackage{setspace}
\usepackage{geometry}
\usepackage{enumitem}
\usepackage{hyperref}
\hypersetup{
  colorlinks,
  citecolor=blue,
  linkcolor=blue,
  urlcolor=green}
\usepackage{marginnote}
\usepackage{xcolor}
\usepackage{upgreek}
\usepackage{tikz}
\usetikzlibrary{calc}
\usepgflibrary {shadings} 
\usetikzlibrary{intersections}
\usetikzlibrary{patterns}
\usepackage{mathtools}
\usepackage{verbatim}
\usepackage{lipsum}  
\usepackage{mathrsfs}
\usepackage{stmaryrd}
\usepackage[
    colorinlistoftodos,
    backgroundcolor=green!20!white,
    bordercolor=green!75!black,
    linecolor=green!75!black
]{todonotes}
\usepackage{cleveref}
\usepackage{esint}
\numberwithin{equation}{section}

\theoremstyle{plain}
\newtheorem{theorem}{Theorem}
\newtheorem{lemma}{Lemma}[section]

\theoremstyle{definition}

\newtheorem{remark}{Remark}

\newcommand{\N}{\mathscr{N}}

\newcommand{\R}{\mathbf{R}}

\newcommand{\Sp}{\mathbf{S}}

\renewcommand{\div}{\operatorname{div}}

\newcommand{\Tr}{\operatorname{Tr}}
\newcommand{\G}{\mathcal{G}}
\newcommand{\pg}{\Xi_s}

\newcommand{\eps}{\varepsilon}

\begin{document}

\title[Uniform small energy regularity]{Uniform small energy regularity for fractional geometric problems}

\author{Marco Badran}
\address{ETH Z\"urich, Department of Mathematics, Rämistrasse 101, 8092 Zürich, Switzerland.}
 	\email{marco.badran@math.ethz.ch}
\author{Giacomo Cozzi}
\address{Dipartimento di Matematica ``Tullio Levi-Civita'', Universit\`a degli Studi di Padova, Via Trieste, 63, 35131 Padova PD, Italy}
\email{cozzi@math.unipd.it}

\begin{abstract}
    %We prove small energy regularity for a parabolic boundary reaction Ginzburg--Landau problem and for fractional harmonic maps to spheres, uniform for $s\in (s_0,1)$, for any $s_0\in (0,1)$. 
    %The first is a parabolic boundary reaction Ginzburg--Landau equation, where we show the validity of a small energy regularity in the case $s\in (0,1)\setminus\tfrac12$, which was open. Secondly, we consider fractional harmonic maps to spheres, where small energy regularity is known but we improve it 
    We prove small energy regularity for a parabolic boundary reaction Ginzburg--Landau problem in the full range $s\in (0,1)$, answering a question posed by Hyder, Segatti, Sire and Wang \cite{HyderSegattiSireWang}. We also obtain a similar small energy regularity result for fractional harmonic maps to spheres. Both results are uniform as $s\nearrow 1$.
\end{abstract}

\maketitle

%\tableofcontents

\section{Introduction}
Fractional harmonic maps were first introduced by Da Lio and Rivi\`ere \cite{DaLioRiv1,DaLioRiv2} as critical points of the $\dot H^{\frac12}(\R,\N)$ Gagliardo seminorm, providing a one-dimensional example of critical points of conformally invariant energies. Since then, they have attracted considerable attention as natural geometric objects, with applications including the Fraser--Schoen theory of extremal metrics for Steklov eigenvalues \cite{FraserSchoen16,FraserSchoen19}. Their regularity theory has been studied extensively, both in the extrinsic \cite{MillotPegon,MillotPegonSchikorra,MillotSireYu,KimNowakSire} and intrinsic \cite{Moser2011,Roberts} settings, while existence results have been obtained through Ginzburg--Landau type relaxations \cite{MillotSire,MillotSireWang}. Through the Caffarelli--Silvestre extension, fractional harmonic maps are also related to weighted harmonic maps with partially free boundary \cite{MillotPegonSchikorra,DuzaarSteffen,Hardt-Lin}. Alongside these elliptic questions, attention has also turned to the corresponding parabolic theory. This includes heat-flow formulations for half-harmonic maps, weak existence and partial regularity results, as well as related harmonic map flows with free boundary, where singularity formation phenomena arise \cite{SireWeiZheng,HyderSegattiSireWang,Struwe24}.

\medskip

More recently, fractional harmonic maps have found applications in the study of \emph{nonlocal minimal surfaces} in higher codimension, to which they are closely related \cite{SerraSeMA}. Roughly speaking, these nonlocal minimal surfaces can be described as the topological singular sets of fractional harmonic maps into spheres \cite{Badran26,CaselliFregugliaPicenni2024,CaselliFregugliaPicenni2026}. As a result, their regularity theory is tightly linked to that of fractional harmonic maps, via a suitable stratification procedure.

\medskip

In view of their role as nonlocal approximations to higher-codimension minimal surfaces as $s\nearrow 1$, it is essential to obtain regularity estimates for fractional harmonic maps whose constants are uniform in $s$, at least for $s$ sufficiently close to $1$. This is the main goal of the present paper.

\medskip

We present two results. \Cref{thm: small energy GL} is a small-energy regularity theorem for a parabolic boundary-reaction Ginzburg--Landau model, introduced in \cite{HyderSegattiSireWang} for $s=\frac12$ as a way of constructing a version of the fractional harmonic map flow. This flow, which corresponds to the gradient flow of the Caffarelli--Silvestre extension energy rather than of the Gagliardo seminorm, is perhaps less natural from an analytic point of view. For the purpose of finding stationary solutions, however, the two flows are equally effective, since they have the same stationary points. The advantage of the Caffarelli--Silvestre flow is that it enjoys a monotonicity formula, which is not available for the Gagliardo-seminorm flow, and this makes it more amenable to regularity arguments. 
In \cite{HyderSegattiSireWang} (see also \cite{GengWang}), a small energy regularity theorem for Ginzburg-Landau boundary reactions was proved when $s=\frac12$ with techniques that, according to the authors, do not generalise to other values of $s$. We extend the $\eps$-regularity result of \cite{HyderSegattiSireWang} to the full range $s\in (0,1)$, with estimates that are uniform for $s\in (s_0,1)$, for any $s_0\in (0,1)$. 

\medskip

The second result, \Cref{thm: small energy FHM}, concerns fractional harmonic maps through their closely related formulation as weighted harmonic maps with partially free boundary; see \cite[Proposition 3.13]{MillotPegonSchikorra}. We describe a general strategy for upgrading an $\eps$-regularity result valid for each fixed $s\in(0,1)$ to a \emph{uniform} $\eps$-regularity result in the range $s\in(s_0,1)$. As a direct corollary, building on the results of \cite{MillotPegonSchikorra}, we establish uniform small-energy regularity when the target is a sphere, the case relevant to nonlocal minimal surfaces.

\medskip

The main ingredient in the proof of \Cref{thm: small energy GL} is a Bochner inequality for boundary reactions, combined with a Harnack inequality that is uniform as $s\nearrow 1$. Despite several modifications, the underlying strategy follows the approach of Struwe \cite{Struwe88}, which in turn builds on an argument of Schoen \cite{Schoen84} in the stationary case. The proof of \Cref{thm: small energy FHM} is based on a similar idea. The main difference is that Schoen's argument requires qualitative smoothness a priori: this is available for Ginzburg--Landau boundary reactions, but not for stationary harmonic maps. We therefore first prove quantitative estimates, under a small-energy assumption, for smooth solutions; these estimates are then used to establish qualitative smoothness under the same small-energy condition, via a dimension-reduction argument.

\subsection{Main results}

Denote by $X=(x,z)\in\R^{m+1}_+\coloneqq \R^{m}\times (0,+\infty)$  and, for $s\in (0,1)$, let $a\coloneqq 1-2s$. Fixing a smooth datum $U_0\colon \R^{m+1}_+\to \R^\ell$ and a number $\eps>0$, we consider the evolution problem 
\begin{equation}\label{eq: GL flow}
	\begin{cases}
		z^a\partial_tU=\div(z^a\nabla U)&\text{in }\R^{m+1}_+\times (0,+\infty),\\
		U =U_0&\text{on }\R^{m+1}_+\times \{0\}\\
		-\partial_z^a\vert_{z=0}U=\eps^{-2s}(1-|U|^2)U&\text{in }\R^{m}\times (0,+\infty)
	\end{cases}
\end{equation}
where we used the shorthand 
\begin{equation}\label{eq: dza}
    \partial_z^a\vert_{z=0}U(x)\coloneqq \lim_{z\downarrow 0}z^a\partial_zU(x,z)
\end{equation}
and the differential operators $\div$ and $\nabla$ are understood with respect to the extended variable $X=(x,z)$. 

\medskip

Problem \eqref{eq: GL flow} is a parabolic Ginzburg--Landau boundary reaction system, corresponding to the local extension of the nonlocal flow 
\begin{equation}\label{eq: nonlocal GL flow}
    \begin{cases}
        (\partial_t-\Delta)^su=\eps^{-2s}(1-|u|^2)u&\text{in }\R^m\times (0,+\infty)\\
        u(x,t)=u_0&\text{in }\R^m\times (-\infty,0]
    \end{cases}
\end{equation}
where $(\partial_t-\Delta)^s$ is the Fourier multiplier with symbol $((2\pi|\xi|)^2+2\pi i\sigma)^s$, being $(\xi,\sigma)$ the frequency variables in $\R^m\times \R$. The correspondence between \eqref{eq: GL flow} and \eqref{eq: nonlocal GL flow} is explained extensively in \cite[Section 2.1]{HyderSegattiSireWang}.  In turn, \eqref{eq: nonlocal GL flow} is often used as an approximation of the fractional harmonic map heat flow to spheres
\begin{equation}\label{eq: nonlocal HM flow}
    \begin{cases}
        (\partial_t-\Delta)^su\perp T_u\Sp^\ell&\text{in }\R^m\times (0,+\infty)\\
        u(x,t)=u_0&\text{in }\R^m\times (-\infty,0].
    \end{cases}
\end{equation}
Problem \eqref{eq: GL flow} corresponds to the gradient flow of the Caffarelli--Silvestre extension energy \cite{CaffarelliSilvestre} with an extra potential term in the thin space (see \eqref{eq: CS with GL} below). While this approach may seem at first less natural than the gradient flow of the standard Gagliardo $\dot H^s$ seminorm, corresponding to the operator $\partial_t+(-\Delta)^s$ rather than $(\partial_t-\Delta)^s$, it is just as effective to find critical points as long-time limit of solutions. The striking advantage of it is that, as opposed to the Gagliardo seminorm flow, the operator $(\partial_t-\Delta)^s$ has a monotonicity formula (see \cite[Lemma 3.1]{HyderSegattiSireWang} or \Cref{lem: monotonicity}), which opens the doors to proving (partial) regularity of the flow. Denoting by $\G^s_{Z_0}$ the fundamental solution at a space-time point $Z_0=(X_0,t_0)\in \partial\R^{m+1}_+\times (0,\infty)$, the quantity 
\begin{equation*}
\begin{split}
    \Phi_\eps (U,Z_0,R)&\coloneqq  \int_{t_0-4R^2}^{t_0-R^2}\int_{\R^{m+1}_+} z^a\frac{|\nabla U|^2}{2}\mathcal{G}^s_{Z_0}(X,t)\,dXdt\\
    &+\int_{t_0-4R^2}^{t_0-R^2}\int_{\R^m}\frac{(1-|U|^2)^2}{\eps^{2s}}\mathcal{G}^s_{Z_0}(x,t)dxdt,
\end{split}
\end{equation*}
is monotone non-decreasing. Our first theorem is a small energy quantitative regularity for solutions to \eqref{eq: GL flow}.

\begin{theorem}\label{thm: small energy GL}
    For any $s_0\in (0,1)$ there exist positive constants $\eta_0,\delta$ and $C$ depending only on $m,\ell$ and $s_0$ such that the following holds. Let $U$ be a solution to \eqref{eq: GL flow} for some $s\in (s_0,1)$ such that, for some $Z_0\in \partial\R^{m+1}_+\times (0,+\infty)$,
    \begin{equation*}
        \Phi_\eps(U,Z_0,R)\leq \eta_0,
    \end{equation*}
    then 
    \begin{equation*}
        \sup_{P_{\delta R}(Z_0)}R^2\left(|\nabla_xU|^2+|\partial_tU|\right)\leq C\delta^{-2}.
    \end{equation*}
    where $P_{\delta R}(Z_0)$ is a parabolic cylinder of size $\delta R$.
\end{theorem}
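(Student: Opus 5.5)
We follow Struwe's parabolic version of Schoen's argument, with the Caffarelli--Silvestre extension playing the role of the interior equation and the boundary reaction handled through a Bochner inequality. Translate so that $Z_0=(0,0)$ and rescale so that $R=1$. Since $U\mapsto U(R\cdot,R^2\cdot)$ maps solutions with parameter $\eps$ to solutions with parameter $\eps/R$, and $\Phi_\eps$ is scale invariant, it suffices to treat $R=1$ with $\eps$ arbitrary.

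We introduce the energy density
\[
e_\eps(U)=\tfrac12|\nabla_x U|^2+\tfrac{1}{4\eps^{2s}}(1-|U|^2)^2,
\]
regarded on the thin space. For the bulk tangential part $|\nabla_xU|^2$, the tangential derivatives $\partial_{x_i}U$ solve the same degenerate equation $z^a\partial_t V=\div(z^a\nabla V)$, with linearised boundary condition $-\partial^a_z\partial_iU=\eps^{-2s}\bigl((1-|U|^2)\partial_iU-2(U\cdot\partial_iU)U\bigr)$.

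\textbf{Step 1: Bochner-type inequality.} We aim to show
\[
z^a\partial_t\bigl(|\nabla_xU|^2+|\partial_t U|\bigr)-\div\bigl(z^a\nabla(\cdots)\bigr)\le 0
\]
in the bulk, together with a boundary inequality of the form
\[
-\partial_z^a|\nabla_xU|^2\le C\eps^{-2s}|\nabla_xU|^2 .
\]
The bad term $-2\eps^{-2s}(U\cdot\partial_iU)^2\le 0$ has a favourable sign. The term $\eps^{-2s}(1-|U|^2)$ is handled by first proving $|U|\le1$ via the maximum principle (assuming $|U_0|\le 1$, which is needed anyway for $\Phi_\eps$ to be meaningful). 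This gives $-\partial_z^a|\nabla_xU|^2\le 2\eps^{-2s}(1-|U|^2)|\nabla_xU|^2$. Since the potential weight is not small pointwise, we instead work with a combined quantity $w=|\nabla_x U|^2+\eps^{-2s}(1-|U|^2)^{\text{power}}$, or we exploit $\partial_t U$, which satisfies an analogous linearised problem. The goal is an inequality in which $w$ is a subsolution of a degenerate parabolic problem with a boundary term controlled by $w^{1+\theta}$, with the scaling matching the energy.

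\textbf{Step 2: Schoen--Struwe selection.} Consider
\[
\sigma\mapsto (1-\sigma)^2\sup_{P_\sigma}w,
\]
which is well defined since $U$ is smooth. Choose a maximiser $\sigma_0$ and a point $Z_1$, and set $e_0=\sup w$ at that scale. If $(1-\sigma_0)^2e_0\le C$ we are done. Otherwise we rescale by $\rho=e_0^{-1/2}$ around $Z_1$. The rescaled $\tilde w$ satisfies $\tilde w(0)=1$ and $\tilde w\le 4$ on $P_1$, so the nonlinear boundary term becomes linear with bounded coefficient.

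\textbf{Step 3: Harnack and monotonicity.} We then need a mean value (local $L^\infty$--$L^1$) inequality for subsolutions of $z^a\partial_t-\div(z^a\nabla)$ with Robin-type boundary term, obtained either by even reflection across $z=0$ or by Moser iteration in weighted spaces. Its constants must be uniform for $s\in(s_0,1)$, because the weight $z^a$ degenerates as $a\to -1$. This gives
\[
1\le C\iint_{P_1}\bigl(z^a\,\text{bulk}+\text{thin}\bigr)\tilde w .
\]
The right-hand side is bounded via the monotonicity formula (\Cref{lem: monotonicity}) by $C\Phi_\eps(U,Z_1,\cdot)$. Monotonicity transfers the bound at the small scale to the bound $\eta_0$ at scale $1$, up to Gaussian-tail errors controlled by choosing $\delta$ small. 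Here we use that, near the vertex, $\mathcal G^s_{Z_1}$ is comparable to $r^{-(m+2-2s)}$ times weights, uniformly in $s$. Taking $\eta_0$ small yields a contradiction, which bounds $|\nabla_xU|^2$. The bound on $|\partial_tU|$ then follows by applying the same mean-value inequality to $\partial_tU$, using energy identities that control $\iint z^a|\partial_tU|^2$ by $\Phi_\eps$.

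\textbf{Main obstacle.} The main obstacle is the combination of Steps 1 and 3. One must find a quantity that is a genuine subsolution with the boundary reaction (where the potential term must be absorbed), and one must prove a weighted Harnack/mean-value inequality whose constants do not blow up as $s\nearrow1$. In that regime the thin space carries the full reaction, so the boundary contribution has to be treated through a trace inequality with $s$-uniform constants.
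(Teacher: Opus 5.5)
Your outline has a genuine gap at exactly the point you flag as the main obstacle: nothing in it bounds the Robin coefficient $\eps^{-2s}(1-|u|^2)$. After the Schoen--Struwe rescaling this coefficient becomes $(\eps\sqrt{e_0})^{-2s}(1-|v|^2)$, which has no a priori bound when $\eps\sqrt{e_0}\ll1$. The maximum principle $|U|\le1$ only gives it a sign, and the Harnack constant depends on its size.

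The combined quantity you suggest does not help. With $L=z^a\partial_t-\div(z^a\nabla)$ one has $L(1-|U|^2)=2z^a|\nabla U|^2\ge0$. Hence, for any increasing $g$, $L\,g(1-|U|^2)=2g'z^a|\nabla U|^2-4g''z^a\sum_j(U\cdot\partial_jU)^2$, and the second term only controls the radial part of $\nabla U$. Adding any increasing function of $1-|U|^2$ to $|\nabla_xU|^2$ therefore destroys the bulk subsolution property you need. ``A boundary term controlled by $w^{1+\theta}$'' is the desired outcome, not an argument.

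The paper keeps $e(U)=|\partial_tU|+|\nabla_xU|^2$ and bounds the coefficient through two ingredients your plan lacks.
\begin{enumerate}
\item A clearing-out lemma (\Cref{lem: clearing out}): small $\Phi_\eps$ forces $|u|\ge1/\sqrt2$ near $Z_0$. A point with $|u|<1/\sqrt2$ would, by regularity at scale $\eps$, make the potential term alone contribute a fixed amount to $\Phi_\eps$.
\item A bound on the potential by the density (\Cref{lem: control pot by e}). Start from $(\partial_t-\Delta)^s\tfrac{|u|^2}{2}=u\cdot(\partial_t-\Delta)^su-|\Xi_su|^2$ and use the clearing-out. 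Then $\varphi=1-|u|^2$ satisfies $\eps^{2s}(\partial_t-\Delta)^s\varphi+\varphi\le2\eps^{2s}\|\Xi_su\|_\infty^2$, with $\|\Xi_su\|_\infty^2\lesssim\|\partial_tu\|_\infty^2+\|\nabla_xu\|_\infty^2+1$ uniformly in $s$. A barrier and the comparison principle give $\sup\eps^{-2s}(1-|u|^2)\lesssim\|e(U)\|_{L^\infty}^2+1$. For the rescaled map, where $e(V)\le4$, this is a bounded Robin coefficient.
\end{enumerate}

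Because this bound consumes $\|\partial_tu\|_\infty$, the time derivative has to be part of the density being maximised from the start. It cannot be recovered afterwards by a separate mean-value inequality for $\partial_tU$, as in your last step, since that would again need the coefficient bounded. Consequently the final contradiction requires smallness of $\rho^{2s-2-m}\int_{P^+_\rho}z^a|\partial_tU|$, which $\Phi_\eps$ does not control directly. The paper obtains it from the monotonicity identity with explicit remainder (\Cref{lem: L2 bound time}), Cauchy--Schwarz, and shifted-Gaussian comparisons (\Cref{lem: smallness}); the Gaussian tails are paid for by the term $\mu E_0$.

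The remaining parts of your plan agree with the paper: the bulk Bochner inequality, the selection argument, and the Moser iteration with an $s$-uniform weighted trace inequality.
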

\begin{remark}
    Observe that solutions to \eqref{eq: GL flow} are qualitatively regular by standard parabolic theory for boundary reactions. The point of \Cref{thm: small energy GL} is the quantification of the bounds, plus their uniformity over $(s_0,1)$. This result was already proved for $s=\tfrac12$ in \cite[Lemma 4.3]{HyderSegattiSireWang}.
\end{remark}

The second result we prove is a uniform small energy regularity result for fractional harmonic maps to spheres, that is maps $u\colon \R^{m}\to\Sp^\ell$ weakly satisfying 
\begin{equation}\label{eq: FHM}
    (-\Delta)^su\perp T_u\Sp^{\ell},\quad \text{in }\R^m
\end{equation}
corresponding formally to (traces of) critical points of 
\begin{equation*}
    E_s(U)\coloneqq \int_{\R^{m+1}_+}z^a|\nabla U|^2dX
\end{equation*}
among maps $U\in H^1_a(\R^{m+1}_+,\R^\ell)$ satisfying $(\Tr U)(x)\in \Sp^\ell$ for a.e.~ $x\in\R^{m}$. As a boundary reaction, \eqref{eq: FHM} reads as
\begin{equation}\label{eq: FHM br}
	\begin{cases}
		\div(z^a\nabla U)=0&\text{in }\R^{m+1}_+,\\
        U(x,0)\in\Sp^\ell&\text{a.e.~ }x\in \R^{m}
        \\
		-\partial_z^a\vert_{z=0}U(x)=|d_su|^2(x)u(x)&\text{in }\R^{m}
	\end{cases}
\end{equation}
where $\partial_z^a\vert_{z=0}$ is defined as in \eqref{eq: dza} and $|d_su|^2$ is defined in \eqref{eq: frac grad} below. 

\medskip

It is known that local minimisers and stationary critical points satisfy the monotonicity formula 
\begin{equation}\label{eq: FHM monotonicity}
    \frac{d}{dr}\Psi_s(U,x_0,r)\geq 0,\quad \text{where}\quad \Psi_s(U,x_0,r)\coloneqq r^{m-2s}\int_{B_r^+(x_0)}z^a|\nabla U|^2dX
\end{equation}
where $x_0\in \partial\R^{m+1}_+$ and $B_r^+(x_0)\coloneqq B_r(x_0)\cap\{z\geq 0\}$.

\medskip

Our second result is a uniform small energy regularity theorem for fractional harmonic maps to spheres.

\begin{theorem}\label{thm: small energy FHM}
    For any $s_0\in (0,1)$ there exist positive constants $\eps_0,\delta$ and $C$ depending only on $m,\ell$ and $s_0$ such that the following holds. Let $U$ be a solution to \eqref{eq: FHM br} satisfying \eqref{eq: FHM monotonicity} for some $s\in (s_0,1)$. If, for some $x_0\in \R^{m}$ and $r>0$ 
    \begin{equation*}
        \Psi_s(U,x_0,r)\leq \eps_0
    \end{equation*}
    then 
    \begin{equation*}
        \sup_{B_{\delta r}} r^2|\nabla u|^2\leq C\delta^{-2}
    \end{equation*}
    where $u\coloneqq \Tr U$.
\end{theorem}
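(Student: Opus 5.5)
The plan follows the strategy announced in the introduction: first prove the estimate for \emph{smooth} solutions by a Schoen-type argument, with constants uniform for $s\in(s_0,1)$, and then remove the smoothness assumption by a compactness and dimension-reduction argument.

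\textbf{Step 1: a priori estimate for smooth solutions.} Assume $U$ is smooth up to $\{z=0\}$ on $B_r^+(x_0)$ and normalise $r=1$. I would consider
\[
f(\sigma)=(1-\sigma)^2\sup_{B_\sigma^+(x_0)}|\nabla u|^2 ,
\]
pick $\sigma_0$ maximising $f$ and a point $x_1$ where the supremum is attained, and put $e_0=|\nabla u(x_1)|^2$ and $\rho=(1-\sigma_0)/2$. If $f(\sigma_0)\le C$ we are done. Otherwise, rescale $V(X)=U(x_1+\lambda X)$ with $\lambda=e_0^{-1/2}$. The rescaled map is again a solution of \eqref{eq: FHM br}, satisfies $|\nabla_x v|\le 2$ on $B_{\rho/\lambda}$ (a large ball), and has $|\nabla v(0)|=1$.

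\textbf{Step 2: contradiction via Bochner and Harnack.} Differentiating the boundary equation tangentially gives a Bochner-type inequality for $W=|\nabla_x V|^2$. In the interior one has $\div(z^a\nabla W)\ge 0$. On the boundary, $-\partial^a_z W\le C(|\nabla v|^2+|d_sv|^2)W$, using $|v|=1$, the Euler–Lagrange equation and the bound on $|d_s v|^2$ coming from the gradient bound plus $L^\infty$. Since $W\le 4$, the Robin coefficient is bounded. A weighted mean-value (Harnack) inequality for $z^a$-subsolutions with bounded Robin boundary term, with constants uniform as $s\nearrow 1$, then gives
\[
1=W(0)\le C\int_{B_1^+}z^a|\nabla V|^2 .
\]
The right-hand side is at most $C\Psi_s(V,0,1)\le C\Psi_s(U,x_0,1)\le C\eps_0$ by \eqref{eq: FHM monotonicity} and scaling. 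Choosing $\eps_0$ small yields a contradiction. The delicate points, which are where uniformity in $s$ is decided, are two:
\begin{enumerate}[label=(\roman*)]
\item the uniformity of the Harnack constant; here I would use the normalisation $(1-s)^{-1}$-type weights so that the constants do not degenerate as $a\to -1$;
\item the control of the nonlocal term $|d_sv|^2$ using only the local gradient bound and the small tail energy.
\end{enumerate}

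\textbf{Step 3: removing smoothness.} Suppose the estimate fails for some non-smooth $U$ with small energy. I would use that, for each fixed $s$, small energy implies smoothness by \cite{MillotPegonSchikorra}, but with an $s$-dependent threshold. To make the threshold uniform, argue by contradiction: take $s_k\to\bar s\in[s_0,1]$ and stationary $U_k$ with $\Psi_{s_k}\le\eps_0$ that are not smooth near $x_0$. Blow up at singular points, using the monotonicity formula and compactness, to obtain tangent maps. Apply dimension reduction to reach a homogeneous non-constant map with energy $\le\eps_0$ that is smooth away from a lower-dimensional set. If $\bar s=1$, compare instead with classical harmonic maps with free boundary. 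Then apply the Step 1 estimate on the smooth part at unit scale: it forces $|\nabla u|$ to be small wherever $u$ is smooth. By homogeneity, this forces the tangent map to be constant, a contradiction. Hence all such $U$ are smooth on $B_{\delta}$, and Step 1 applies.

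I expect Step 3, the uniform qualitative smoothness, to be the main obstacle. It requires compactness of stationary fractional harmonic maps across varying $s$, including the limit $s\to1$, with strong convergence; energy concentration must be ruled out by the smallness of $\eps_0$.
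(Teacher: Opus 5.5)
\textbf{The gap is in Step 3,} at ``by homogeneity, this forces the tangent map to be constant.'' That implication does not hold.

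Applying your smooth estimate on unit-size balls covering the annulus $B_{3/2}\setminus B_{3/4}$ only gives $|x|^2|\nabla w(x)|^2\lesssim \eps_0$. This needs the dimension-reduced map $W$ to be smooth on the \emph{whole} annulus, i.e.\ singular set exactly the spine, which \cite[Lemma 7.12]{MillotPegonSchikorra} provides; ``smooth away from a lower-dimensional set'' is not enough for the covering. The bound says the link $g=w|_{\Sp^{n-1}}$ has small oscillation, $w(\Sp^{n-1})\subset B_\eta(q)$. Homogeneity does not turn ``small'' into ``zero''. You also cannot apply the estimate on large balls, because they contain the singular origin.

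What is missing is a \emph{Liouville theorem for $0$-homogeneous $s$-harmonic maps with small image}, and the paper supplies exactly this.
\begin{enumerate}[label=(\roman*)]
\item By $0$-homogeneity the equation passes to the link as $(-\Delta)^s_{\Sp^{n-1}}g\perp T_g\Sp^\ell$, with an averaged positive kernel $K_s(\theta,\phi)$.
\item For $\eta$ small, independently of $s$, the distance $\rho$ to $q$ is strictly convex on $B_\eta(q)$.
\item Hence $f=\rho\circ g$ satisfies $(-\Delta)^s_{\Sp^{n-1}}f\le -c\int_{\Sp^{n-1}}|g(\theta)-g(\phi)|^2K_s\,dV(\phi)\le 0$; the first-order term drops out by the orthogonality in (i).
\item At a maximum point of $f$ on the closed link this forces $g$ to be constant.
\end{enumerate}
A constant $g$ contradicts the fact that tangent maps at singular points are singular \cite[Theorem 5.1]{MillotPegonSchikorra}.

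With this Liouville step available, your compactness-in-$s$ framework is unnecessary, and as stated it is also problematic. Strong convergence of stationary maps as $s_k\to\bar s$, especially $\bar s=1$ with a comparison to free-boundary harmonic maps, is a heavy ingredient you do not prove. Moreover, ruling out energy concentration ``by smallness of $\eps_0$'' would itself require a uniform $\eps$-regularity for non-smooth maps, which is the statement you are proving, so that step is circular.

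The paper argues at a \emph{fixed} $s$. It uses the Millot--Pegon--Schikorra theory only qualitatively: existence of tangent maps, their singularity at the origin, and dimension reduction. The only quantitative thresholds are $\eps_1$ from the smooth estimate and the convexity radius $\eta$. Both are independent of $s$, and this is what makes $\eps_2$ uniform.

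Your Steps 1--2 match the paper's lemma for smooth solutions: the Bochner inequality for $|\nabla_x U|^2$, the bound on $|d_su|^2$ coming from $|u|=1$ and the gradient bound, the uniform Harnack inequality, and Schoen's rescaling. One detail needs fixing. If you maximise $(1-\sigma)^2\sup_{B_\sigma}$ over all of $[0,1]$, the comparison $\Psi_s(U,x_1,\lambda)\lesssim\Psi_s(U,x_0,1)$ costs a factor $(1-\sigma_0)^{2s-m}$, which is unbounded as $\sigma_0\to1$ when $m>2s$. Maximise over $\sigma\in[0,\tfrac12]$, or $[0,\delta]$, as the paper does.
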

We remark that the $\eps$-regularity theorem for fractional harmonic maps to spheres was already proved in \cite[Theorem 5.1]{MillotPegonSchikorra} but the general procedure outlined there does not ensure uniformity of the parameters as $s\nearrow 1$. We introduce a general procedure to upgrade the existing small energy regularity to uniform small energy regularity for fractional harmonic maps. 

\subsection{The extension method}
We always denote $X=(x,z)\in \R^{m+1}_+\eqqcolon \R^{m}\times (0,+\infty)$. We identify $\R^m\simeq \partial\R^{m+1}_+$ by identifying $x\simeq (x,0)$. For any $s\in (0,1)$, we define $a\coloneqq 1-2s\in (-1,1)$ and define the Caffarelli--Silvestre extension energy 
\begin{equation}\label{eq: CS}
    E_s(U)\coloneqq \frac{\delta_s}{2}\int_{\R^m}z^a|\nabla U|^2dX
\end{equation}
on the space
\begin{equation*}
    H^1_a(\R^m,\R^\ell)\coloneqq \left\lbrace U\in L^2_{\mathrm{loc}}(\R^m,\R^\ell):\nabla U\in L^2(z^adX)\right\rbrace.
\end{equation*}
where 
\begin{equation*}
    \delta_s\coloneqq 2^{2s-1}\frac{\Gamma(s)}{\Gamma(1-s)}.
\end{equation*}
Energy \eqref{eq: CS} was introduced in \cite{CaffarelliSilvestre} as a way of representing nonlocal problems as local problems in an extended space. Minimising $E_s$ over the class of functions $U\in H^1_a$ with prescribed trace $\Tr U=u$ yields a weak solution to the problem 
\begin{equation}\label{eq: CS extension}
    \begin{cases}
        \div(z^a\nabla U)=0&\text{in }\R^{m+1}_+\\
        U(x,0)=u(x)&\text{on }\R^m
    \end{cases}
\end{equation}
whose weighted Dirichlet-to-Neumann map gives the fractional laplacian of the datum 
\begin{equation}\label{eq: D2N}
    -\lim_{z\downarrow0}z^a\partial_z U(x,z)=(-\Delta)^su(x).
\end{equation}
The gradient flow associated to \eqref{eq: CS} is
\begin{equation}\label{eq: GFCS}
    \begin{cases}
        z^a\partial_tU=\div(z^a\nabla U)&\text{in }\R^{m+1}_+\\
        U(x,0)=u(x)&\text{on }\R^m
    \end{cases}
\end{equation}
whose Dirichlet-to-Neumann operator gives 
\begin{equation}\label{eq: D2N par}
    -\lim_{z\downarrow0}z^a\partial_z U(x,z)=(\partial_t-\Delta)^su(x).
\end{equation}
The operators in the right-hand sides of \eqref{eq: D2N} and \eqref{eq: D2N par} are Fourier multiplier operators with symbols, 
\begin{equation*}
    (2\pi|\xi|)^{2s}\quad\text{and}\quad  ((2\pi|\xi|)^{2}+2\pi i\sigma)^s
\end{equation*}
which can be represented as integro-differential operators as follows (see, e.g., \cite{MillotPegonSchikorra,HyderSegattiSireWang}),
\begin{equation}\label{eq: frac lapl}
    (-\Delta)^su(x)\coloneqq \int_{\R^m}\left(u(x)-u(y)\right)K_s(x-y)dy
\end{equation}
and 
\begin{equation}\label{eq: frac heat}
     (\partial_t-\Delta)^su(x,t)\coloneqq \int_0^{+\infty}\int_{\R^m}\left(u(x,t)-u(x-z,t-\tau)\right)H_s(z,\tau)dzd\tau 
\end{equation}
where the kernels are given, respectively, by 
\begin{equation}
\label{kernels}
    K_s(z)\coloneqq s2^{2s}\frac{\Gamma\left(\frac{m+2s}{2}\right)}{\pi^\frac{m}{2}\Gamma(1-s)}\frac{1}{|z|^{m+2s}},\quad H_s(z,\tau)\coloneqq \frac{1}{(4\pi)^{\frac{m}{2}}|\Gamma(-s)|}\,
\frac{e^{-\frac{|z|^2}{4\tau}}}{\tau^{\frac{m}{2}+1+s}}
\end{equation}
where $z\in\R^m$, $\tau>0$ and the integrals are taken in the principal value sense. We also set 
\begin{equation}\label{eq: frac grad}
    |d_su|^2(x)=\frac{c_{m,s}}{2}\int_{\R^m}\frac{|u(x)-u(y)|^2}{|x-y|^{m+2s}}dy,\qquad c_{m,s}\coloneqq  s2^{2s}\frac{\Gamma\left(\frac{m+2s}{2}\right)}{\pi^\frac{m}{2}\Gamma(1-s)}
\end{equation}
We also record the backwards fundamental solution of \eqref{eq: GFCS} about a space-time point $(X_0,t_0)$
\begin{equation}
	\G^s_{(X_0,t_0)}(X,t)\coloneqq \frac1{\Gamma(s)(4\pi)^{\frac{m}2}}\frac{e^{-\frac{|X-X_0|^2}{4(t_0-t)}}}{|t-t_0|^{\frac{m}{2}+1-s}},\qquad t<t_0.
\end{equation}
and denote $\G^s\coloneqq \G^s_{(0,0)}$.

\medskip

Unless otherwise specified, we will always use lowercase letters $u,v,\dots$ to denote thin-space traces of functions $U,V,\dots$ in the extended space.

\section{Parabolic Ginzburg--Landau boundary reactions}
We define the universal parameters of the problem to be the positive integers $m,\ell$ and any $s_0\in (0,1)$. We introduce the notation $A\lesssim B$ to say that $A\leq CB$ for some constant $C>0$ depending only on the universal parameters.  

\medskip

We start by considering solutions $U$ to \eqref{eq: GL flow}. We remark that it is always possible to construct solutions to \eqref{eq: GL flow}, for example via a discrete approximation scheme, see \cite[Section 2]{HyderSegattiSireWang}.

\medskip

Equation \eqref{eq: GL flow} is the gradient flow associated to the boundary reaction Ginzburg--Landau energy functional 
\begin{equation}\label{eq: CS with GL}
	F_s(V)\coloneqq \int_{\R^{m+1}_+}z^a\frac{|\nabla V|^2}2dX+\frac{1}{4\eps^{2s}}\int_{\R^m}(1-|v|^2)^2dx
\end{equation}
where $v=\Tr V$. We also denote 
\begin{equation}\label{eq: E0}
E_0\coloneqq \int_{\R^{m+1}_+}z^a|\nabla U_0|^2dX    
\end{equation}
and recall that, along solutions of \eqref{eq: GL flow} 
\begin{equation*}
    \int_{\R^{m+1}_+}z^a|\nabla U(\cdot,t)|^2dX\leq E_0
\end{equation*}
by \cite[Formula (2.33)]{HyderSegattiSireWang}.

\subsection{The monotone quantity}
Let $Z_0\coloneqq (X_0,t_0)$ be a fixed space-time point. For any $R>0$, define the extended time slab
\begin{equation*}
    T_R^+(Z_0)\coloneqq \{(X,t)\in \R^{m+1}_+\times(0,+\infty):-4R^2<t-t_0<-R^2\}
\end{equation*}
and, if $X_0\in\R^m$, the thin time  slab
\begin{equation*}
    T_R(Z_0)\coloneqq \{(x,t)\in \R^{m}\times(0,+\infty):-4R^2<t-t_0<-R^2\}.
\end{equation*}
Whenever $t_0=0$, we simply denote the above sets by $T_R^+$ and $T_R$. For any $U\in C^\infty (\R^{m+1}_+\times (0,+\infty))$, any $Z_0=(X_0,t_0)$ with $X_0\in \R^m,\, t_0>0$ and any $R>0$, we set
\begin{equation*}
	\Phi_\eps (U,Z_0,R)\coloneqq  \int_{T_R^+} z^a\frac{|\nabla U|^2}{2}\mathcal{G}^s_{Z_0}(X,t)\,dXdt+\int_{T_R }W(u)\mathcal{G}^s_{Z_0}(x,t)dxdt,
\end{equation*}
where $W(u)=\frac{(1-|u|^2)^2}{4\eps^{2s}}$. On solutions to \eqref{eq: GL flow}, we have the following monotonicity result.

\begin{lemma}[Monotonicity formula, 
{\cite[Lemma 3.1]{HyderSegattiSireWang}}]\label{lem: monotonicity}
	For any $Z_0=(X_0,t_0)$ with $X_0 \in\R^{m}$ and $t_0>0$, if $U$ solves \eqref{eq: GL flow}, then the quantity $\Phi_{\eps}(U,Z_0,R)$ is non-decreasing with respect to $R$ in $(0,\tfrac12\sqrt{t_0})$.
\end{lemma}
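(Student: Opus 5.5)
The monotonicity statement is the parabolic analogue of Struwe's monotonicity formula, adapted to the weighted operator $\div(z^a\nabla\cdot)$ and the boundary potential. Since it is quoted from \cite{HyderSegattiSireWang}, the plan is to reproduce the standard scaling argument.

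Without loss of generality take $Z_0=(0,0)$ after translating time; $t$ then ranges over negative values with $t>-t_0$. The first step is to rescale. Set $U_R(X,t)\coloneqq U(RX,R^2t)$. This solves the same system, with $\eps$ replaced by $\eps/R$, because $z^a$ is homogeneous and $\partial_z^a$ scales like $R^{1-a}=R^{2s}$. The kernel $\G^s$ satisfies $\G^s(RX,R^2t)=R^{-(m+2-2s)}\G^s(X,t)$, and the weight $z^a\,dX\,dt$ scales like $R^{m+1+a+2}$. Hence
\[
\Phi_\eps(U,0,R)=\int_{T_1^+}z^a\tfrac{|\nabla U_R|^2}{2}\G^s\,dXdt+\int_{T_1}R^{2s}W(u_R)\G^s\,dxdt .
\]
Here the potential term picks up exactly $R^{2s}$, consistent with $\eps^{-2s}$, since the thin measure scales like $R^{m+2}$.

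The second step is to differentiate in $R$. Note that $\frac{d}{dR}U_R=\frac1R(X\cdot\nabla U+2t\partial_tU)_R$. One then integrates by parts in $X$ the term $z^a\nabla U_R\cdot\nabla(\partial_RU_R)\G^s$. The bulk equation $z^a\partial_tU=\div(z^a\nabla U)$ produces $z^a\partial_t U_R\,\partial_RU_R\,\G^s$. The boundary at $z=0$ produces $-\partial^a_zU_R\cdot\partial_RU_R\,\G^s$, which by the Neumann condition equals $-R^{2s}\frac{d}{dR}\bigl[W(u_R)\bigr]$ up to the explicit $R$-derivative of the factor $R^{2s}$. The term with $\nabla\G^s=-\frac{X}{2|t|}\G^s$ also remains.

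The key identity is that $\G^s$ is the backward kernel for the weighted operator, namely $\nabla\G^s=\frac{X}{2t}\G^s$ (with $t<0$), and that $X\cdot\nabla U+2t\partial_tU=2t(\partial_tU+\frac{X\cdot\nabla U}{2t})$. Combining the bulk terms then gives
\[
\frac{d}{dR}\Phi_\eps(U,0,R)=\frac1R\int_{T_1^+}z^a\frac{|X\cdot\nabla U_R+2t\partial_tU_R|^2}{2|t|}\G^s+\frac{2s}{R}\int_{T_1}R^{2s}W(u_R)\G^s\ \ge 0 .
\]
The second term arises from differentiating the explicit factor $R^{2s}$, and $W\ge0$. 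Equivalently, one may work with the unscaled quantity and use the identity obtained by testing the equation with $(X\cdot\nabla U+2t\partial_tU)\G^s$.

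The main obstacle is justifying the boundary integration by parts at $z=0$ with the degenerate or singular weight $z^a$. One must check that $z^a\partial_zU\to\partial_z^aU$ in a sense compatible with multiplication by $\partial_RU_R$, and that there are no boundary contributions from $|X|\to\infty$; the latter follows from the Gaussian decay of $\G^s$ and the energy bound $E_0$. I would handle the former by integrating over $\{z>\delta\}$ and letting $\delta\to0$, using the qualitative regularity of solutions. The restriction $R<\frac12\sqrt{t_0}$ ensures that $T_R$ stays inside $t>0$, where the equation holds.
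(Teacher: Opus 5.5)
Your argument is correct. With $Z_0=(0,0)$, the rescaling computation gives exactly
\[
\frac{d}{dR}\Phi_\eps(U,0,R)=\frac1R\int_{T_R^+}z^a\frac{|X\cdot\nabla U+2t\,\partial_tU|^2}{2|t|}\,\G^s+\frac{2s}{R}\int_{T_R}W(u)\,\G^s\ \ge 0,
\]
and the single boundary term at $z=0$ cancels against the implicit $R$-derivative of $W(u_R)$.

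The paper does not prove this lemma itself; it quotes \cite{HyderSegattiSireWang}. The argument it does supply, \Cref{lem: monotonicity remainder}, works on fixed time slices instead. It shows that $F(t)=(t-t_0)q(t)$ is non-decreasing, where $q(t)$ is the $\G^s_{Z_0}$-weighted energy at time $t$. Monotonicity of $\Phi_\eps$ then follows from $\Phi_\eps(U,Z_0,R)=-\int_1^4F(t_0-R^2\tau)\,\frac{d\tau}{\tau}$.

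Your route is more economical. It needs one spatial integration by parts. It avoids rewriting $\partial_t\G^s$ via the equation, handling the Hessian term $D^2U[\nabla U,X]$, and checking $z^a\partial_z\G^s\to0$. It also produces the potential remainder with the right constant automatically, from the explicit factor $R^{2s}$.

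That last point matters. On the thin space, $\G^s$ is not a backward heat kernel of $\R^m$: its time exponent is $\tfrac m2+1-s$, not $\tfrac m2$. So there $\partial_t\G^s+\Delta_x\G^s=\frac{1-s}{|t-t_0|}\G^s$. The thin integration by parts in the proof of \Cref{lem: monotonicity remainder} therefore misses a term $-(1-s)\int_{\R^m}W(u)\G^s_{Z_0}$. The correct coefficient of $\int_{\R^m}W(u)\G^s_{Z_0}$ in \eqref{eq: monotonicity remainder} is $s$, not $1$, which is exactly what your $2s/R$ corresponds to after the change of variables. Monotonicity is unaffected, since $s>0$.

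The advantage of the time-slice version is that it gives a monotone quantity pointwise in time. The paper genuinely needs this in \Cref{lem: L2 bound time}, to bound $|F(t_0-4R^2)|$ by $\Phi_\eps(U,Z_0,2R)$ and thereby control $\int z^a|\partial_tU|^2\G^s_{Z_0}$.
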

The following is a statement about monotonicity over time slices with the precise remainder term. We remark that the monotonicity statement alone, without the explicit expression for the remainder term, already appears in \cite[Lemma 3.1]{HyderSegattiSireWang}.

\begin{lemma}\label{lem: monotonicity remainder}
	Let $U$ be a solution to \eqref{eq: GL flow} and let $Z_0=(X_0,t_0)\in \R^{m}\times (0,+\infty)$. Then for every $t<t_0$ the relation 
	\begin{equation}\label{eq: monotonicity remainder}
		\begin{split}
			\frac{d}{dt}&\Bigg((t-t_0)\int_{\R^{m+1}_+}z^a\frac{|\nabla U|^2}{2}\G^s_{Z_0}+(t-t_0)\int_{\R^m}W(u)\G^s_{Z_0}\Bigg)\\
			&=\frac{1}{4|t-t_0|}\int_{\R^{m+1}_+}z^a|2(t-t_0)\partial_tU+\nabla U\cdot (X-X_0)|^2\G^s_{Z_0}+\int_{\R^m}W(u)\G^s_{Z_0}
		\end{split}
	\end{equation}
	holds.
\end{lemma}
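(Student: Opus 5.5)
By translation we may take $X_0=0$. We set $\tau\coloneqq t-t_0<0$ and $\G\coloneqq \G^s_{Z_0}$. The plan is to differentiate the weighted energy $\tau\int z^a\frac{|\nabla U|^2}{2}\G+\tau\int W(u)\G$ directly in $t$. We then use the equation \eqref{eq: GL flow} and the explicit structure of $\G$, and integrate by parts, in the spirit of Struwe's computation for the harmonic map flow.

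The two structural identities of the kernel we use are
\[
\nabla\G=\frac{X}{2\tau}\G,\qquad \partial_t\G=-\Big(\frac{|X|^2}{4\tau^2}+\frac{m/2+1-s}{\tau}\Big)\G,
\]
where the sign conventions follow from $\G\propto |\tau|^{-(m/2+1-s)}e^{|X|^2/(4\tau)}$. In particular $z^a\G$ solves the backward equation $\partial_t(z^a\G)+\div(z^a\nabla\G)=0$ up to the homogeneity defect.

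\textbf{Step 1: direct differentiation.} Differentiating gives
\[
\int z^a\tfrac{|\nabla U|^2}{2}\G+\tau\int z^a\nabla U\cdot\nabla\partial_tU\,\G+\tau\int z^a\tfrac{|\nabla U|^2}{2}\partial_t\G,
\]
together with the analogous thin terms $\int W\G+\tau\int \nabla W(u)\cdot\partial_tu\,\G+\tau\int W\partial_t\G$. We integrate by parts the cross term $\int z^a\nabla U\cdot\nabla\partial_tU\,\G$. The boundary contribution at $z=0$ is $-\int\partial_z^aU\cdot\partial_tu\,\G=\eps^{-2s}\int(1-|u|^2)u\cdot\partial_tu\,\G=-\int\nabla W(u)\cdot\partial_t u\,\G$, so it cancels the thin time-derivative term exactly. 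The interior contribution is $-\int z^a|\partial_tU|^2\G-\int z^a\partial_tU\cdot\nabla U\cdot\frac{X}{2\tau}\G$, where we used the bulk equation.

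\textbf{Step 2: the Pohozaev-type term.} We rewrite $\int z^a|\nabla U|^2(\tfrac12+\tau\,\partial_t\G/\G)\G$, which contains $|X|^2$ and the dimensional constant. To do so we test the bulk equation against $(X\cdot\nabla U)\G$, i.e.\ we compute $\div\big(z^a[(X\cdot\nabla U)\nabla U-\tfrac{|\nabla U|^2}{2}X]\G\big)$. The weight contributes $\div(z^aX)=(m+1+a)z^a=(m+2-2s)z^a$, which is exactly the exponent $2(m/2+1-s)$ in $\G$. This is the reason the weight $z^a$ and $\G^s$ are matched. On $z=0$ we have $X\cdot e_z=0$ and $X\cdot\nabla U=x\cdot\nabla_xu$, so the boundary term is $-\int(x\cdot\nabla_xu)\cdot\partial_z^aU\,\G=-\int x\cdot\nabla_x W(u)\,\G$. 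Integrating by parts in $x$ turns this into $\int W\,(m+x\cdot x/(2\tau))\G$. This thin term combines with $\tau\int W\partial_t\G$. The thin kernel has exponent $m/2+1-s$ while the thin divergence only produces $m$, so the remaining coefficient yields the net $+\int W\G$ on the right-hand side (with the factor bookkeeping matching the statement).

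\textbf{Step 3: assembly.} We collect the terms. The bulk terms form $\frac{1}{4|\tau|}\int z^a|2\tau\partial_tU+\nabla U\cdot X|^2\G$: the $|\partial_tU|^2$, cross and $|X\cdot\nabla U|^2$ pieces come respectively from Step 1 and from the $|X|^2/\tau^2$ part after the Pohozaev identity. The thin terms reduce to $\int W(u)\G$. We justify the integrations by parts using the smoothness of solutions, the Gaussian decay of $\G$ and the energy bound $\int z^a|\nabla U|^2\le E_0$. The boundary terms are handled via $\lim_{z\downarrow0}z^a\partial_zU$.

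The main obstacle is the exact bookkeeping of the thin-space constants in Step 2. The mismatch between the $x$-divergence (giving $m$) and the homogeneity $m/2+1-s$ of $\G$ restricted to $z=0$ must produce precisely the coefficient $1$ in front of $\int W\G$. If it does not, we would revisit whether the factor $\tau$ multiplying $W$ should be adjusted, but we expect the stated normalization to work out.
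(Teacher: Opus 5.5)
\textbf{The gap is the thin-space bookkeeping you deferred, and it gives the coefficient $s$, not $1$.} Your route is the paper's. You differentiate $\tau q(\tau)$ and integrate the cross term by parts, so that the Neumann condition cancels $\tau\int W'(u)\cdot\partial_t u\,\G$. You then treat $\tau\int z^a\frac{|\nabla U|^2}{2}\partial_t\G$ by a Pohozaev identity. Your vector field $z^a[(X\cdot\nabla U)\nabla U-\frac12|\nabla U|^2X]\G$ is equivalent to the paper's two integrations by parts based on $z^a\partial_t\G+\div(z^a\nabla\G)=0$. That identity holds exactly in the bulk, with no defect. The bulk terms and the completion of the square are correct.

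The failure is in the step you asserted would match the statement. On $\{z=0\}$ one has $x\G=2\tau\nabla_x\G$, so the Pohozaev boundary term contributes $-\frac12\int x\cdot\nabla_x[W(u)]\,\G=\tau\int W(u)\,\Delta_x\G$. The restricted kernel has time exponent $\frac m2+1-s$ but only $m$ spatial directions, so $\partial_t\G+\Delta_x\G=\frac{1-s}{|\tau|}\G$ there. Hence
\[
\tau\int_{\R^m}W(u)\,\partial_t\G-\frac12\int_{\R^m}x\cdot\nabla_x[W(u)]\,\G=\tau\int_{\R^m}W(u)\,(\partial_t\G+\Delta_x\G)=-(1-s)\int_{\R^m}W(u)\,\G .
\]
Adding the $+\int W(u)\G$ from the product rule, the net thin remainder is $s\int_{\R^m}W(u)\G$, not $\int_{\R^m}W(u)\G$.

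So the identity cannot be proved as displayed. It is false for $s<1$ whenever $\int W(u)\G\neq0$, and the correct right-hand side has $s\int_{\R^m}W(u)\G^s_{Z_0}$.

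Scaling confirms this independently. After translating $Z_0$ to the origin, $U_R(X,t)\coloneqq U(RX,R^2t)$ solves the flow with $\eps/R$ in place of $\eps$. The bracketed quantity at $\tau=-R^2$ equals minus the $\G$-weighted energy of $U_R$ at time $-1$ with parameter $\eps/R$. Differentiating in $R$, the relation $\partial_\eps W_\eps=-\frac{2s}{\eps}W_\eps$ produces exactly the factor $s$, which equals $1$ only in the local case.

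Your fallback of adjusting the prefactor $\tau$ is therefore not the fix; the coefficient in the statement is. The paper's proof has the same slip: its claim $t\int W(u)\partial_t\G^s=\frac12\int W'(u)(\nabla u\cdot x)\G^s$ omits $-(1-s)\int W(u)\G^s$. Since $s>0$, the right-hand side stays nonnegative. So the monotonicity of $\Phi_\eps$, and the later use of the identity to bound $\partial_tU$ in $L^2$, are unaffected.
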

\begin{proof}
	Up to a translation we can assume $Z_0=(0,0)$. In the computations below, we use 
    \begin{equation*}
        z^a\partial_t\G^s+\div(z^a\nabla\G^s)=0,\quad \text{and}\quad \nabla\G^s=\frac{X}{2t}\G^s,\qquad \forall X\in \R^{m+1}_+,\ t<0.
    \end{equation*}
We start by computing 
\begin{equation*}
\begin{split}
	\frac{d}{dt}\left(t\int_{\R^{m+1}_+}z^a\frac{|\nabla U|^2}{2}\G^s\right)&=\int_{\R^{m+1}_+}z^a\frac{|\nabla U|^2}{2}\G^s+t\int_{\R^{m+1}_+}z^a\nabla U\cdot\nabla\partial_tU\G^s\\
	&+t\int_{\R^{m+1}_+}z^a\frac{|\nabla U|^2}{2}\partial_t\G^s
\end{split}
\end{equation*}
The second term becomes, after an integration by parts
\begin{equation*}
	\begin{split}
		t\int_{\R^{m+1}_+}z^a\nabla U\cdot\nabla\partial_tU\G^s&=-t\int_{\R^{m+1}_+}\div(z^a\nabla U)\partial_tU\G^s-t\int_{\R^{m+1}_+}z^a(\nabla U\cdot\nabla\G^s)\partial_tU\\
		&+t\int_{\R^m}(-\partial_z^a\vert_{z=0}U)\partial_tU\G^s\\
		&=-t\int_{\R^{m+1}_+}z^a|\partial_tU|^2\G^s-\frac12\int_{\R^{m+1}_+}z^a\partial_tU(\nabla U\cdot X)\G^s\\
		&-t\int_{\R^m}W'(u)\partial_tu\G^s
	\end{split}
\end{equation*}
We also have
\begin{equation*}
	\begin{split}
		t\int_{\R^{m+1}_+}z^a\frac{|\nabla U|^2}{2}\partial_t\G^s&=-t\int_{\R^{m+1}_+}\frac{|\nabla U|^2}{2}\div(z^a\nabla\G^s)\\
		&=t\int_{\R^{m+1}_+}z^aD^2U\left[\nabla U,\nabla \G^s\right]\\
		&=\frac12\int_{\R^{m+1}_+}z^aD^2U\left[\nabla U,X\right]\G^s\\
		&=\frac12\int_{\R^{m+1}_+}z^a\left(\nabla(\nabla U\cdot X)\cdot \nabla U-|\nabla U|^2\right)\G^s
	\end{split}
\end{equation*}
where the boundary term vanishes because
\begin{equation*}
	\lim_{z\to 0}z^a\partial_z\G^s=\lim_{z\to 0}z^{a+1}\G^s=0.
\end{equation*}
Integrating again by parts, we obtain
\begin{equation*}
	\begin{split}
		\frac12\int_{\R^{m+1}_+}z^a\nabla(\nabla U\cdot X)\cdot \nabla U\G^s&=-\frac12\int_{\R^{m+1}_+}\div(z^a\nabla U)(\nabla U\cdot X) \G^s\\
		&-\frac12\int_{\R^{m+1}_+}z^a(\nabla U\cdot X)\nabla U\cdot\nabla\G^s\\
		&+\frac12\int_{\R^{m+1}_+}(-\partial_z^a\vert_{z=0}U)(\nabla U\cdot X)\G^s\\
		&=-\frac12\int_{\R^{m+1}_+}z^a\partial_tU(\nabla U\cdot X) \G^s-\frac1{4t}\int_{\R^{m+1}_+}z^a(\nabla U\cdot X)^2\\
		&-\frac12\int_{\R^{m+1}_+}W'(u)(\nabla u\cdot x)\G^s.
	\end{split}
\end{equation*}
Lastly, we compute 
\begin{equation*}
	\begin{split}
		\frac{d}{dt}\left(t\int_{\R^m}W(u)\G^s\right)&=\int_{\R^m}W(u)\G^s+t\int_{\R^m}W'(u)\partial_tu\G^s+t\int_{\R^m}W(u)\partial_t\G^s
	\end{split}
\end{equation*}
and integrating by parts we have 
\begin{equation*}
	t\int_{\R^m}W(u)\partial_t\G^s=\frac12\int_{\R^{m+1}_+}W'(u)(\nabla u\cdot x)\G^s.
\end{equation*}
Putting everything together and recognising the square 
\begin{equation*}
	|2t\partial_tU+\nabla U\cdot X|^2=4t^2|\partial_tU|^2+4tU(\nabla U\cdot X)+(\nabla U\cdot X)^2
\end{equation*}
the conclusion follows.
\end{proof}

\subsection{The Bochner inequality}
The following result is a Bochner-type inequality for parabolic Ginzburg--Landau boundary reactions. 
%This, combined with a suitable Harnack inequality and a classical argument by Schoen [cite] yields will 
\begin{lemma}[Bochner inequality for boundary reactions]\label{lem: Bochner}
	Let $U$ be a smooth solution to \eqref{eq: GL flow}. Then the density 
	\begin{equation*}
		e(U)\coloneqq |\partial_t U|+|\nabla_x U|^2
	\end{equation*}
	satisfies 
	\begin{equation*}
		\begin{cases}
			z^a\partial_te(U)-\div(z^a\nabla e(U))\leq 0&\text{in }\R^{m+1}_+\times (0,+\infty)\\
			-\partial_z^a\vert_{z=0}e(U)\leq 2\eps^{-2s}(1-|U|^2)e(U)&\text{on }\R^m\times (0,+\infty).
		\end{cases}
	\end{equation*}
	\begin{proof}
		We compute, with the shorthand $\partial_j\coloneqq \partial_{x_j},$
		\begin{equation*}
		\begin{split}
			z^a\partial_t(|\nabla_xU|^2)-\div(z^a\nabla (|\nabla_xU|^2))&=2\sum_{j=1}^m\partial_jU\cdot (z^a\partial_t\partial_jU-\div(z^a\nabla \partial_jU))\\
			&-2\sum_{j,k=1}^mz^a|\partial_{jk}U|^2-2\sum_{j=1}^mz^a|\partial_{jz}U|^2
		\end{split}
		\end{equation*}
		and using that 
		\begin{equation*}
			z^a\partial_t\partial_jU-\div(z^a\nabla \partial_jU)=\partial_j(z^a\partial_tU-\div(z^a\nabla U))=0
		\end{equation*}
		we find that the above quantity is $\leq 0$. Similarly, 
		\begin{equation*}
			\begin{split}
			z^a\partial_t(|\partial_tU|)-\div(z^a\nabla (|\partial_tU|))&=\frac{\partial_tU}{|\partial_tU|}\cdot \partial_t(z^a\partial_tU-\div(z^a\nabla U))-\sum_{j=1}^{m+1}z^a\partial_{jt}U\partial_j\left(\frac{\partial_{t}U}{|\partial_{t}U|}\right)
		\end{split}
		\end{equation*}
        where we denoted, for brevity $\partial_{m+1}\coloneqq \partial_z$. Recognizing the projection 
        \begin{equation*}
            \partial_{jt}U\partial_j\left(\frac{\partial_{t}U}{|\partial_{t}U|}\right)
            %=\frac{1}{|\partial_t U|}\left(|\partial_{jt}U|^2-\frac{(\partial_t U \cdot \partial_{jt}U)^2}{|\partial_t U|^2}\right)
=
\frac{1}{|\partial_t U|}
\left|
\partial_{jt}U
-
\frac{\partial_t U \cdot \partial_{jt}U}{|\partial_t U|^2}\,\partial_t U
\right|^2
\ge 0.
        \end{equation*}
		we obtain the nonpositivity of the above term, and thus
		\begin{equation*}
			z^a\partial_te(U)-\div(z^a\nabla e(U))\leq 0.
		\end{equation*}
		The second inequality follows from 
		\begin{equation}\label{eq: neumann GL}
	\begin{split}
		-\partial_z^a\vert_{z=0}e(U)&=|\partial_tU|^{-1}\partial_tU\cdot \partial_t(\eps^{-{2s}}(1-|U|^2)U)+2\nabla_xU\cdot \nabla_x(\eps^{-{2s}}(1-|U|^2)U)\\
		&=\eps^{-{2s}}(1-|U|^2)(|\partial_tU|+2|\nabla_x U|^2)\\
        &-2\eps^{-{2s}}(U\cdot \nabla_x U)^2-2\eps^{-{2s}}|\partial_tU|^{-1}(U\cdot \partial_t U)^2\\
		&\leq 2\eps^{-{2s}}(1-|U|^2)e(U)
	\end{split}
\end{equation}
on $\{z=0\}$.
	\end{proof}
\end{lemma}

Given any point $X_0\in\R^m$ and a time $t_0>0$ we define the (extended) parabolic cylinder of size $r>0$ 
\begin{equation*}
    P^+_r(X_0,t_0)\coloneqq\left\lbrace (X,t)\in \R^{m+1}_+\times (0,+\infty): |X-X_0|< r,\ |t-t_0|< r^2 \right\rbrace
\end{equation*}
and $P_r(X_0,t_0)\coloneqq P_r^+(X_0,t_0)\cap\{z=0\}$.
The next ingredient we need is a simple ``clearing-out'' lemma for small energy solutions of \eqref{eq: GL flow}. We remark that the case $s=\frac12$ is already proved in \cite[Lemma 4.2]{HyderSegattiSireWang}.

\begin{lemma}[Clearing-out]\label{lem: clearing out}
	There exist positive constants  $\eta_1,\delta$ and $\eps_0$ depending only on $m$ and $s_0$ such that the following holds. If $Z_0\in \R^m\times(0,+\infty)$ and  $U$ is a smooth solution to  \eqref{eq: GL flow} for $\eps\leq \eps_0$ satisfying 
	\begin{equation*}
		\Phi_\eps (U,Z_0,1)\leq \eta_1,
	\end{equation*}
	then 
	\begin{equation*}
		|U|\geq \frac{1}{\sqrt{2}}\quad \text{on }P_{\delta}(Z_0).
	\end{equation*}
\end{lemma}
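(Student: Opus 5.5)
We argue by contradiction combined with a uniform Hölder bound on the trace; the main difficulty is keeping every constant uniform in $s\in(s_0,1)$. By translation we may take $Z_0=(0,t_0)$. The smallness of $\Phi_\eps(U,Z_0,1)$ controls two averaged quantities on the thin slab $T_1(Z_0)$. The first is the Ginzburg--Landau potential $\int_{T_1}W(u)\G^s_{Z_0}\le\eta_1$. The second is the weighted Dirichlet energy, which through a trace inequality controls a (Gaussian-weighted) parabolic $H^s$-type seminorm of $u$. Since $\G^s_{Z_0}$ is bounded below on $B_2\times[t_0-4,t_0-1]$ by a constant depending only on $m$ and $s_0$, we get
\begin{equation*}
    \int_{t_0-4}^{t_0-1}\int_{B_2}(1-|u|^2)^2\,dx\,dt\lesssim \eps^{2s}\eta_1 .
\end{equation*}
Hence $|u|$ is close to $1$ in $L^2$ on most of that slab. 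However, we need a pointwise statement on the later cylinder $P_\delta(Z_0)$, which does not lie in the slab.

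To transport the information forward in time, we use \Cref{lem: monotonicity}. For each $Z_1\in P_\delta(Z_0)$ and $\delta$ small, the Gaussians $\G^s_{Z_1}$ and $\G^s_{Z_0}$ are comparable on the slab $T_1(Z_0)$ (up to constants depending on $m,s_0$), provided $|x_1|,|t_1-t_0|\le\delta$. This gives $\Phi_\eps(U,Z_1,R_1)\lesssim\eta_1$ for some $R_1\simeq 1$. Monotonicity then yields $\Phi_\eps(U,Z_1,r)\lesssim\eta_1$ for every $r\in(0,R_1]$. Rescaling parabolically around $Z_1$ with factor $\eps$, i.e. setting $U_\eps(X,t)=U(X_1+\eps X,t_1+\eps^2t)$, gives a solution of \eqref{eq: GL flow} with $\eps=1$. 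We use here that the boundary condition scales as $\eps^{-2s}\cdot\eps^{2s}$ and that $\Phi$ is scale invariant. The rescaled solution satisfies $\Phi_1(U_\eps,0,r)\lesssim\eta_1$ for all $r\le R_1/\eps$, in particular for $r=1$; this is where the hypothesis $\eps\le\eps_0$ enters, guaranteeing $R_1/\eps\ge 1$.

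Now suppose $|U(Z_1)|<1/\sqrt2$. For $\eps=1$ we want uniform regularity: a bound $|u_\eps|\le C$ via a maximum principle from the boundary reaction (since $(1-|U|^2)U$ points inward when $|U|>1$, we should have $|U|\le\max(1,\|U_0\|_\infty)$, and we assume $|U_0|\le1$ as is standard), then uniform parabolic $C^\alpha$ estimates near $(0,0)$ for the degenerate problem with bounded Neumann datum. These estimates must have constants uniform in $a\in(-1,1-2s_0]$, which holds for $A_2$-weighted parabolic De Giorgi--Nash--Moser since $z^a$ has uniformly bounded $A_2$ constant in this range. It follows that $|u_\eps|\le 3/4$, say, on a parabolic cylinder $P_\rho(0,-\rho'^2)$ for a fixed $\rho$, located at a backward time where $\G^s$ is bounded below. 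There $W(u_\eps)\ge c>0$, so
\begin{equation*}
    \Phi_1(U_\eps,0,r_*)\ge\int_{T_{r_*}}W(u_\eps)\G^s\ge c_*(m,s_0)
\end{equation*}
for a suitable fixed $r_*\le 1$. This contradicts $\Phi_1\lesssim\eta_1$ once $\eta_1$ is chosen small.

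The main obstacle is the uniform-in-$s$ Hölder continuity at unit scale. The concern is that the constant $\delta_s$ and the degeneration of the weight as $s\to1$ could spoil it, and that the backward-time gap must be handled so the cylinder where $|u_\eps|$ is small sits inside a slab where $\G^s$ has a uniform lower bound. I would first try a direct interior–boundary Moser iteration for $z^a\partial_t-\div(z^a\nabla)$ with Neumann data, tracking the dependence on $a$ explicitly.
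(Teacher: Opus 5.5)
Your argument follows the paper's route: pick a bad point, transfer smallness of $\Phi_\eps$ from $Z_0$ to it by comparing Gaussians, use monotonicity down to scale $\eps$, rescale to $\eps=1$, use continuity to get $|u|\le\frac34$ on a region of fixed size, and contradict through the potential term. You also place that region at backward times more carefully than the written proof does. However, the step that carries the uniformity in $s$, which is the whole point of the lemma, rests on a false claim.

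\textbf{The $A_2$ claim is false.} The $A_2$ constant of $|z|^a$ is \emph{not} uniformly bounded for $a\in(-1,1-2s_0]$. On any cube $Q=Q'\times(-h,h)$,
\begin{equation*}
\Bigl(\fint_Q|z|^a\Bigr)\Bigl(\fint_Q|z|^{-a}\Bigr)=\frac{1}{(1+a)(1-a)}=\frac{1}{4s(1-s)},
\end{equation*}
which blows up as $s\nearrow1$. Multiplying the weight by $\delta_s$ does not change this. So weighted parabolic De Giorgi--Nash--Moser, whose constants depend on the $A_2$ constant, gives no uniform H\"older modulus exactly in the regime of interest.

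A direct two-sided Moser scheme does not obviously help either. The normalized measure $\frac{\delta_s}{2}z^a\,dX$ concentrates on $\{z=0\}$ and is not uniformly doubling on balls centred at height comparable to their radius. The paper's proof only says ``by regularity theory'' at this point, so you cannot lean on it here.

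\textbf{A possible fix with the paper's own tools.} After rescaling to $\eps=1$, \Cref{lem: Bochner} gives $-\partial^a_\zeta e(V)\le 2(1-|V|^2)e(V)\le 2e(V)$. This requires no clearing-out, so $e(V)$ is a nonnegative subsolution of \eqref{eq: subsol} with $C_0=2$. The $L^1$--$L^\infty$ estimate of \Cref{lem: Harnack} uses only cylinders centred on $\{z=0\}$ and is claimed there to be uniform in $s$. It bounds $e(V)$ near the bad point by unit-scale averages, which after scaling back are controlled by \Cref{lem: smallness}. This gives a uniform Lipschitz bound on the trace in $x$ and $t$, which is all you need.

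\textbf{The Gaussian comparison is also false as stated.} $\G^s_{Z_1}$ and $\G^s_{Z_0}$ are not comparable on $T_1(Z_0)$, because the slab is unbounded in $X$. With $X_0=0$, the ratio $\G^s_{Z_1}/\G^s_{Z_0}$ is a bounded factor times
\begin{equation*}
\exp\Bigl(\frac{(t_1-t_0)|X|^2}{4(t_0-t)(t_1-t)}+\frac{X\cdot X_1}{2(t_1-t)}\Bigr).
\end{equation*}
This is unbounded as $|X|\to\infty$ whenever $x_1\neq0$ or $t_1>t_0$, and both occur in $P_\delta(Z_0)$. Comparability holds only on $\{|X|\le K\}$ with $\delta\lesssim K^{-1}$. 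The tail must be absorbed by the global energy bound along the flow, as the paper does via \Cref{lem: smallness} and its $\mu E_0$ term. Your transfer step should therefore read $\Phi_\eps(U,Z_1,R_1)\lesssim\eta_1+\mu E_0$, and $\delta$ then depends on $E_0$ through $\mu$. With both corrections, the rest of your argument goes through.
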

\begin{proof}
    The proof is standard and we sketch it for completeness. The conclusion follows by the fact smallness of $|U|$ forces a positive lower bound on the potential term in $\Phi_\eps$. 

    \medskip

    Without loss of generality, we assume $(X_0,t_0)=(0,0)$. By assumption and the monotonicity formula we have, for any $r\in (0,1)$,
        \begin{equation}
            \int_{-4r^2}^{-r^2}\int_{\R^m}\frac1{\eps^{2s}}(1-|u|^2)^2\G^s(x,t)dxdt<\eta_1.
        \end{equation}
        Suppose by contradiction that for every $\delta>0$ there is a point $X_*=(x_*,0)\in \R^m$ and $t_*>0$ such that $Z_*\coloneqq (X_*,t_*)\in P_\delta$ and $|U(Z_*)|<\tfrac{1}{\sqrt{2}}$. The rescaling $V(X,t)\coloneqq U(X_*+\eps X,t_*+\eps^2 t)$ satisfies \eqref{eq: GL flow} with $\eps=1$, thus by regularity theory there is $\delta_1>0$ such that 
        \begin{equation*}
            |V|\leq \frac{3}{4},\quad\text{in }P_{\delta_1}^+,
        \end{equation*}
        which means, scaling back, that 
        \begin{equation*}
            \frac{(1-|u|^2)^2}{\eps^{2s}}\geq c_1\eps^{-2s},\quad \text{on }P_{\delta_1\eps}
        \end{equation*}
        for some $c_1>0$. The Gaussian centred in $Z_*$ satisfies
        \begin{equation*}
            \G_{Z_*}^s(x,t)\geq c_2\eps^{-m-2+2s},\quad \text{in }\{(X,t):|X-X_*|\leq \eps,\, -4\eps^2\leq t-t_*\leq -\eps^2\}
        \end{equation*}
        getting the lower bound 
        \begin{align*}
            \Phi_\eps(U,Z_*,1)&\geq \Phi_{\eps}(U,Z_*,\eps)\\
            &\geq \int_{T_\eps(Z_*)}\frac{(1-|u|^2)^2}{\eps^{2s}}\G^s_{Z_*}(x,t)dxdt\\
            &\geq c_1c_2\eps^{2s}\eps^{m+2}\eps^{-m-2*2s}\\
            &=c_1c_2>0.
        \end{align*}
        The conclusion follows by 
    approximating the Gaussian in $Z_*$ with the Gaussian at the origin by taking $\delta$ sufficiently small (exactly as in \Cref{lem: smallness} below), yielding a contradiction.
\end{proof}

The clearing out under small energy ensures that the term $(1-|u|^2)/\eps^{2s}$ appearing in the boundary reaction of the Bochner inequality can be controlled by $e(U)$ itself, according to the following result.

\begin{lemma}\label{lem: control pot by e} Under the same assumptions of \Cref{lem: clearing out}, we have 
	\begin{equation*}
		\sup_{P_{1/2}(Z_0)}\frac{(1-|u|^2)}{\eps^{2s}}\lesssim \|e(U)\|^2_{L^\infty(P_1^+)}+1.
	\end{equation*}
\end{lemma}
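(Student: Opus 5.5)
The bound will come from the structure of the boundary condition in \eqref{eq: GL flow}, read as an elliptic equation for the modulus in the thin space. On $P_\delta(Z_0)$ the clearing-out \Cref{lem: clearing out} gives $|u|\geq 1/\sqrt2$, so $u=\rho\omega$ with $\rho=|u|$ and $\omega\in\Sp^{\ell}$ is smooth there. Set $w\coloneqq \eps^{-2s}(1-|u|^2)$. Taking the inner product of the Neumann condition with $u$ gives
\begin{equation*}
    -u\cdot\partial_z^a\vert_{z=0}U=\eps^{-2s}(1-|u|^2)|u|^2=w|u|^2\geq \tfrac12 w .
\end{equation*}
It therefore suffices to bound $|u\cdot \partial^a_z U|$ on the thin set, at least where $w\geq0$. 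Where $w<0$ we have $|u|>1$, and there we need a separate bound: first show $|U|\le 1$ via the maximum principle applied to $|U|^2$ with initial datum $|U_0|\le1$ (which I take as part of the standing assumptions), so that $w\ge 0$ everywhere.

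To estimate $\partial_z^aU$ I use the interior equation $\div(z^a\nabla U)=z^a\partial_tU$. This gives
\begin{equation*}
    \partial_z(z^a\partial_zU)=z^a\big(\partial_tU-\Delta_xU\big),
\end{equation*}
and integrating in $z$ from $0$ to $h$ yields
\begin{equation*}
    -\partial^a_z\vert_{z=0}U=-h^a\partial_zU(x,h)+\int_0^h z^a(\partial_tU-\Delta_xU)\,dz .
\end{equation*}
This reduces the problem to controlling $\partial_zU$ at height $h\sim1$ and $\partial_tU,\Delta_xU$ in terms of $e(U)$ on $P_1^+$. Here $\int_0^h z^a\,dz=h^{1+a}/(1+a)=h^{2-2s}/(2-2s)$, which stays bounded uniformly for $s\in(s_0,1)$.

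The term $\partial_tU$ is bounded directly by $e(U)$. For $\Delta_xU$ and for $\partial_zU(\cdot,h)$ I need second-order and normal-derivative control. For this I apply interior parabolic estimates to the tangential derivatives $\partial_jU$. These solve the same degenerate equation, with Neumann data $\partial_j(wu)$, which involves $w$ again. To avoid circularity I estimate away from $z=0$ only: at height $z\in(h/2,2h)$ the equation is uniformly parabolic, with constants uniform in $s\in(s_0,1)$, so Schauder or energy estimates give $|D^2U|+|\partial_zU|\lesssim \sup_{P_1^+}(|\nabla U|+|\partial_tU|)$.

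The near-boundary integral $\int_0^h z^a\Delta_xU\,dz$ is the delicate part. I would avoid pointwise second derivatives by testing against a cutoff in $x$ and integrating by parts to move the derivatives onto the cutoff. This yields a bound on a spatial average of $w$ over a small thin ball by $\sup e(U)+\sup|\partial_zU|$. The Harnack-type uniformity of the problem, combined with $w\ge0$ and the Bochner-type control, then upgrades this averaged bound to a pointwise one. Finally, $|\partial_z U|$ on $P_1^+$ is controlled in terms of $e(U)$ by tangential energy estimates, and products such as $e(U)\cdot$(constant) are absorbed into $\|e(U)\|_{L^\infty}^2+1$. I expect this averaging-to-pointwise step, keeping all constants uniform as $s\nearrow1$, to be the main obstacle.
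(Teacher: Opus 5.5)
There is a genuine gap. Your identity $w|u|^2=-u\cdot\partial_z^aU$ and the integration of the equation in $z$ are correct. The argument breaks down at the near-boundary term, which you yourself flag as the main obstacle.

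Interior estimates at height $z$ give at best $|\Delta_xU(x,z)|\lesssim z^{-1}\sup|\nabla_xU|$. So $\int_0^h z^a|\Delta_xU|\,dz$ is finite only when $a>0$, i.e.\ $s<\frac12$, and even then with a constant $\sim(1-2s)^{-1}$. This is structural, not technical. Since $-u\cdot\partial_z^aU=u\cdot(\partial_t-\Delta)^su$ is an operator of order $2s$ applied to $u$, for $s\ge\frac12$ it cannot be bounded pointwise by first-order quantities such as $e(U)$.

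Everything therefore rests on your averaging-to-pointwise upgrade, and nothing in your plan supplies it:
\begin{itemize}
\item The Harnack inequality of the appendix needs a nonnegative \emph{sub}solution, but $z^a\partial_t(1-|U|^2)-\div(z^a\nabla(1-|U|^2))=2z^a|\nabla U|^2\ge0$, so $1-|U|^2$ is a supersolution.
\item The Bochner inequality concerns $e(U)$, not $w$.
\item The only a priori modulus of continuity of $w$ is $|\nabla_xw|\lesssim\eps^{-2s}\|e(U)\|_{L^\infty}^{1/2}$, which degenerates as $\eps\to0$. It cannot turn an average over a ball of size $\sim\|e(U)\|_{L^\infty}^{-1/2}$ into an $\eps$-independent pointwise bound.
\item Controlling $|\partial_zU|$ near $z=0$ ``in terms of $e(U)$'' is circular, since $z^a\partial_zU\to-wu$. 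For $s<\frac12$, $\partial_zU$ even blows up like $z^{-a}w$.
\end{itemize}

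The missing idea is to split $u\cdot(\partial_t-\Delta)^su$ rather than estimate it:
\begin{equation*}
(\partial_t-\Delta)^s\tfrac{|u|^2}{2}=u\cdot(\partial_t-\Delta)^su-|\Xi_su|^2 .
\end{equation*}
Only the carr\'e du champ $|\Xi_su|^2$ needs a bound, and being quadratic in increments it is genuinely first order. From the explicit kernel $H_s$ and $|u|\le1$ one gets $|\Xi_su|^2\lesssim\|\partial_tu\|_{L^\infty}^2+\|\nabla u\|_{L^\infty}^2+1$, uniformly in $s\in(s_0,1)$.

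The term $(\partial_t-\Delta)^s\frac{|u|^2}{2}$ is never estimated. Instead, clearing-out ($|u|^2\ge\frac12$) shows that $\varphi=1-|u|^2$ satisfies the linear inequality with absorption
\begin{equation*}
\eps^{2s}(\partial_t-\Delta)^s\varphi+\varphi\le2\eps^{2s}\|\Xi_su\|_{L^\infty}^2 .
\end{equation*}
A comparison principle with the barrier $2\eps^{2s}\|\Xi_su\|_{L^\infty}^2+\psi_\eps$, where $\psi_\eps\lesssim\eps^{2s}$ on $P_{1/2}(Z_0)$, then gives the pointwise bound. The absorption term $\varphi/\eps^{2s}$, exploited through the maximum principle, is what makes the estimate independent of $\eps$. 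Your approach has no substitute for it.
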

\begin{proof}
    Using the representation formula \eqref{eq: frac heat}, we readily compute
    \begin{equation}\label{eq: eq for |u|}
        (\partial_t-\Delta)^s \frac{|u|^2}{2}=u(\partial_t-\Delta)^su-|\pg u|^2,
    \end{equation}
    where
    \begin{equation*}
        \pg u(t,x)=\left(\frac{1}{2}\int_0^{\infty}\int_{\R^m}(u(t,x)-u(t-\tau,x-z))^2 H_s(\tau, z)\,dz\,d\tau\right)^{\frac{1}{2}},
    \end{equation*}
    and $H_s$ is defined as in \eqref{kernels}.
    We now show that $|\pg u|^2$ can be bounded by means of 
    $\|\partial_t u\|_{\infty}$, 
    $\|\nabla u\|_{\infty}$. 
    Fix a space-time point $Z_0\coloneqq (X_0,t_0)\in \R^m\times (0,+\infty)$; then, for any 
    $(x,t)\in P_{1/2}(Z_0)$
    , it holds
    \begin{equation*}
    \begin{split}
    |\pg u|^2(x,t)&=\frac{1}{2}\int_{P_1(Z_0)}(u(t,x)-u(t-\tau,x-z))^2 H_s(\tau, z)\,dz\,d\tau\\&+\frac{1}{2}\int_{(P_1(Z_0))^c}(u(t,x)-u(t-\tau,x-z))^2 H_s(\tau, z)\,dz\,d\tau.
    \end{split}
    \end{equation*}
    Expanding, we get 
    \begin{equation*}
        (u(t,x)-u(t-\tau,x-z))^2\lesssim \|\partial_tu\|_{L^\infty(P_1(Z_0))}^2\tau^2+\|\nabla_xu\|_{L^\infty(P_1(Z_0))}^2|z|^2
    \end{equation*}
    for every $(x,t)\in P_{1/2}(Z_0)$ and every $(z,\tau)\in P_{1}(Z_0)$.
    Thus, using that $|u|\leq 1$ a.e.~ and noting that $H_s$ is integrable away from the origin, we obtain for any $(x,t)\in P_{1/2}(Z_0)$,
    \begin{equation*}
    \begin{split}
        |\pg u|^2(x,t)&\lesssim  \, \int_{P_1(Z_0)} \|\partial_t u\|_{L^{\infty}(P_1(Z_0))}^2\,\tau^2\,H_s(\tau,z)\,dz\,d\tau\,\\
        &+\int_{P_1(X_0,t_0)} \|\nabla u \|_{L^\infty(P_1(Z_0))}^2 |z|^2 \,H_s(\tau,z)\,dz\,d\tau+\int _{(P_1(Z_0))^c} H_s(\tau,z)\,d\tau\,dz\\
        &\lesssim \|\partial_t u\|^2_{L^\infty(P_1(Z_0))}\int_0^{1} \tau^{1-s}\,d\tau\,+\|\nabla u\|^2_{L^\infty(P_1(Z_0))}\int_0^1 \frac{\tau^{-s}}{\Gamma(-s)}d\tau+1\\
        &\lesssim \|\partial_t u\|^2_{L^{\infty}(P_1(Z_0))}+\|\nabla u\|^2_{L^\infty(P_1(Z_0))}+1,
    \end{split}
    \end{equation*}
    where we used that $\sup_{s\in (s_0,1)}|\Gamma(-s)|(1-s)<+\infty$. Thus, 
    \begin{equation*}
        \|\Xi_su\|_{L^\infty(P_{1/2}(Z_0))}^2\lesssim \|e(U)\|^2_{L^\infty(P_1^+(Z_0))}+1.
    \end{equation*}
    Using \eqref{eq: eq for |u|}, \eqref{eq: nonlocal GL flow} and \Cref{lem: clearing out}, we obtain
    \begin{equation*}
        (\partial_t-\Delta)^s \frac{|u|^2}{2}=\frac{1}{\eps^{2s}}(1-|u|^2)|u|^2-|\pg u|^2\geq \frac{1}{2\eps^{2s}}(1-|u|^2)-\|\pg u\|^2_{L^\infty(P_{1/2}(Z_0))}.
    \end{equation*}
    in $P_{1/2}(Z_0)$. Setting $\varphi=1-|u|^2$ we have 
    \begin{equation}
    \label{subsolution nonhom}
        \eps^{2s}(\partial_t-\Delta)^s\varphi+\varphi\leq 2\eps^{2s}\|\pg u\|^2_{L^\infty(P_{1/2}(Z_0))}.
    \end{equation}
    Noting that $\varphi\leq 1$, we can estimate $|\varphi|$ in $P_{1/2}(Z_0)$ by the sum of the constant solution $2\eps^{2s}\|\Xi_su\|_{L^\infty(P_{1/2}(Z_0))}^2$ and a subsolution to the problem 
    \begin{equation}
        \begin{cases}
            \eps^{2s}(\partial_t-\Delta)^s\psi+\psi\leq 0&\text{in }P_{1}(Z_0)\\
            \psi\leq 1&\text{in }P_{1}(Z_0)^c
        \end{cases}
    \end{equation}
    which admits a barrier of the form $\psi_\eps(x,t)\coloneqq h_\eps(t-t_0)+\eta_\eps(x-x_0)$, where $h_\eps$ and $\eta_\eps$ satisfy 
    \begin{equation*}
        \begin{cases}
            \eps^{2s}\partial_t^sh_\eps+h_\eps=0&t>-1\\
            h_\eps=1&t\leq -1,
        \end{cases}\qquad 
        \begin{cases}
            \eps^{2s}(-\Delta)^s\eta_\eps+\eta_\eps=0&x\in B_1\\
            \eta_\eps=1&x\in B_1^c,
        \end{cases}
    \end{equation*}
    It is possible to see that, for $t>-1$ and $x\in B_1$, 
    \begin{equation}
        |h_\eps(t)|\lesssim \frac{\eps^{2s}}{(t+1)^{s}},\quad |\eta_\eps(x)|\lesssim \frac{\eps^{2s}}{(1-|x|)^{2s}}.
    \end{equation}
    see \cite{Mainardi,Simon14} and \cite{RosOton-Serra14}, respectively.
    Thus 
    \begin{equation*}
        \frac{\psi_\eps(x,t)}{\eps^{2s}}\lesssim \frac{1}{(t-t_0+1)^s}+\frac{1}{(1-|x-x_0|)^{2s}}\lesssim 1\quad\text{in }P_{1/2}(
Z_0). 
    \end{equation*}
    Here, we used the comparison principle in \cite[Corollary 1.6]{StingaTorrea}. Thus, we get 
    \begin{equation*}
        \sup_{P_{1/2}(Z_0)}\frac{1-|u|^2}{\eps^{2s}}\leq 2\|\Xi_su\|_{L^\infty(P_{1/2}(Z_0))}^2+\sup_{P_{1/2}(Z_0)}\frac{\psi_\eps}{\eps^{2s}}\lesssim \|e(U)\|^2_{L^\infty(P_1^+(Z_0))}+1
    \end{equation*}
    concluding the proof.
\end{proof}

The following two results establish a certain control over weighted energy-like quantities by the monotone quantity $\Phi_\eps$ and the initial energy $E_0$, given by \eqref{eq: E0}.
\begin{lemma}\label{lem: L2 bound time}
	Let $U$ be a solution to \eqref{eq: GL flow}. Then, for any  $Z_0=(X_0,t_0)\in \R^m\times (0,+\infty)$ and $R>0$ we have 
	\begin{equation*}
		R^2\int_{T_R^+(Z_0)}z^a|\partial_tU|^2\G^s_{Z_0}\lesssim \Phi_\eps(U,Z_0,2R)+\int_{T_R^+(Z_0)}z^a|\nabla U|^2\G^s_{(X_0,t_0+R^2)}
	\end{equation*}
\end{lemma}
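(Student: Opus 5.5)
The plan is to extract $\partial_tU$ from the perfect square in the remainder of \Cref{lem: monotonicity remainder}, and then to control the leftover radial term by a time-shifted Gaussian. By translation we take $Z_0=(0,0)$ and write $\G^s=\G^s_{(0,0)}$. Set
\begin{equation*}
f(t)\coloneqq t\int_{\R^{m+1}_+}z^a\frac{|\nabla U|^2}{2}\G^s+t\int_{\R^m}W(u)\G^s\leq 0,\qquad t<0 .
\end{equation*}
\Cref{lem: monotonicity remainder} gives, for $\tau<b<0$,
\begin{equation*}
\int_\tau^{b}\frac{1}{4|t|}\int_{\R^{m+1}_+}z^a|2t\partial_tU+\nabla U\cdot X|^2\G^s\,dt\leq f(b)-f(\tau)\leq |\tau|\,e(\tau),
\end{equation*}
where $e(\tau)\coloneqq \int z^a\frac{|\nabla U|^2}2\G^s(\cdot,\tau)+\int W(u)\G^s(\cdot,\tau)$. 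The inequality holds because $f(b)\le 0$ and the potential contribution $\int W\G^s$ on the right-hand side of \eqref{eq: monotonicity remainder} is nonnegative.

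\emph{Step 1: the remainder is bounded by $\Phi_\eps(U,0,2R)$.} Take $b=-R^2$ and let $\tau$ range over $(-16R^2,-4R^2)$. For every such $\tau$ the time interval $(\tau,-R^2)$ contains $(-4R^2,-R^2)$, and $|\tau|\le 16R^2$. Averaging the inequality in $\tau$ over $(-16R^2,-4R^2)$ therefore gives
\begin{equation*}
\int_{T_R^+}\frac{1}{|t|}z^a|2t\partial_tU+\nabla U\cdot X|^2\G^s\lesssim \frac{1}{12R^2}\int_{-16R^2}^{-4R^2}16R^2\,e(\tau)\,d\tau\lesssim \Phi_\eps(U,0,2R),
\end{equation*}
since $T_{2R}=\{-16R^2<t<-4R^2\}$.

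\emph{Step 2: isolate $\partial_tU$.} On $T_R^+$ we have $R^2\le|t|\le4R^2$. Using $|2t\partial_tU|^2\le 2|2t\partial_tU+\nabla U\cdot X|^2+2|\nabla U\cdot X|^2$ we get
\begin{equation*}
R^2|\partial_tU|^2\lesssim |t||\partial_tU|^2\lesssim \frac{|2t\partial_tU+\nabla U\cdot X|^2}{|t|}+|\nabla U|^2\frac{|X|^2}{|t|}.
\end{equation*}
After multiplying by $z^a\G^s$ and integrating over $T_R^+$, the first term is handled by Step 1.

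\emph{Step 3: Gaussian comparison, the only slightly delicate point.} It remains to show that $\frac{|X|^2}{|t|}\G^s_{(0,0)}(X,t)\lesssim \G^s_{(0,R^2)}(X,t)$ on $T_R^+$, with constants uniform in $s$. The distance to the shifted pole is $|t-R^2|=|t|+R^2\in[\,|t|,\tfrac54|t|\,]$, so the power prefactors are comparable, with ratio bounded by $(5/4)^{m/2+1}$ uniformly in $s$. The ratio of the exponentials is
\begin{equation*}
\exp\Big(-\frac{|X|^2}{4|t|}+\frac{|X|^2}{4(|t|+R^2)}\Big)=\exp\Big(-\frac{|X|^2R^2}{4|t|(|t|+R^2)}\Big)\le \exp\Big(-\frac{|X|^2}{20|t|}\Big),
\end{equation*}
because $R^2/(|t|+R^2)\ge 1/5$. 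Since $\sup_{y\ge0}y\,e^{-y/20}<\infty$, the claimed pointwise bound holds. Integrating then gives the term $\int_{T_R^+}z^a|\nabla U|^2\G^s_{(X_0,t_0+R^2)}$ and concludes the proof.
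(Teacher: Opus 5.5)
Your proposal is correct and follows the paper's proof. You integrate the identity of \Cref{lem: monotonicity remainder}, bound the square remainder on $T_R^+$ by the Gaussian energy at earlier times $\tau\in(-16R^2,-4R^2)$, split $|\partial_tU|^2$ with the elementary inequality, and compare $\frac{|X|^2}{|t|}\G^s$ with the shifted Gaussian $\G^s_{(0,R^2)}$. Two small differences: you average over $\tau$ where the paper uses monotonicity of $F$ to get the $\log 4$ factor (the same idea), and you actually verify the Gaussian comparison, which the paper only asserts.

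There is one harmless arithmetic slip. On $T_R^+$ one has $|t|+R^2\in[\tfrac54|t|,2|t|]$, not $[|t|,\tfrac54|t|]$. So the prefactor ratio is bounded by $2^{m/2+1}$ rather than $(5/4)^{m/2+1}$, which is still uniform in $s$.
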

\begin{proof}
	Up to translating we can suppose $Z_0=(0,0)$. Let 
\begin{equation*}
	q(t)\coloneqq E_\eps (U(\cdot,t))= \int_{\R^{m+1}_+}z^a\frac{|\nabla U|^2}{2}\G^s+\int_{\R^m}W(u)\G^s,\quad F(t)\coloneqq tq(t)
\end{equation*}
Integrating the identity \eqref{eq: monotonicity remainder} over $(-4R^2,-R^2)$, we get 
\begin{equation*}
	\int_{-4R^2}^{-R^2}\frac{1}{4|t|}\int_{\R^{m+1}_+}z^a|2t\partial_tU+\nabla U\cdot X|^2\G^s\leq F(-R^2)-F(-4R^2)\leq|F(-4R^2)|
\end{equation*}
By the estimate 
\begin{equation*}
	|\partial_t U|^2\leq \frac{1}{2t^2}|2t\partial_tU+\nabla U\cdot X|^2+\frac{|X|^2}{2t^2}|\nabla U|^2
\end{equation*}
and monotonicity of $F$, we get, using also that $-4R^2\leq t\leq -R^2$ 
\begin{equation*}
	\int_{T^+_R}z^a|\partial_tU|^2\G^s\lesssim \frac{|F(-4R^2)|}{R^2}+\frac{1}{R^2}\int_{T^+_R}z^a\frac{|X|^2}{2|t|}|\nabla U|^2\G^s
\end{equation*}
Since $F$ is nondecreasing, for any $t\in (-16R^2,-4R^2)$ we have 
\begin{equation*}
	F(t)\leq F(-4R^2)\iff tq(t)\leq -4R^2q(-R^2)\iff q(t)\geq \frac{4R^2}{-t}q(-4R^2)
\end{equation*}
and integrating over $(-16R^2,-4R^2)$ we get
\begin{equation*}
	\Phi_\eps(U,0,2R)=\int_{-16R^2}^{-4R^2}q(t)dt\geq 4R^2q(-4R^2)\int_{-16R^2}^{-4R^2}\frac{dt}{-t}=(\log4)|F(-4R^2)|
\end{equation*}
thus we get 
\begin{equation*}
	R^2\int_{T^+_R}z^a|\partial_tU|^2\G^s\lesssim \Phi_\eps(U,0,2R)+\int_{T^+_R}z^a\frac{|X|^2}{|t|}|\nabla U|^2\G^s
\end{equation*}
The conclusion follows by observing that 
\begin{equation*}
	\frac{|X|^2}{|t|}\G^s(X,t)\lesssim \G^s_{(0,R^2)}(X,t)
\end{equation*}
for every $(X,t)\in T_R^+$.
\end{proof}

\begin{lemma}\label{lem: smallness}
	Let $U$ be a solution to \eqref{eq: GL flow}. Given $\mu>0$ there exists $\sigma>0$ depending on $\mu$ such that the following holds. Given a point $Z_0=(X_0,t_0)\in \R^m\times (0,+\infty)$ and $\rho>0$ such that $P^+_{2\rho}(Z_0)\subset P^+_\sigma$, then
	\begin{equation*}
		\rho^{2s-2-m}\int_{P_\rho^+(Z_0)}z^ae(U)(X,t)dXdt\lesssim  h\left(\Phi_\eps(U,0,\sigma)+\mu E_0\right)
	\end{equation*}
	where $h(t)\coloneqq t+\sqrt{t}$.
\end{lemma}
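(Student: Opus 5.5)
The plan is to compare the local energy on the small cylinder with the Gaussian-weighted quantity $\Phi_\eps$ centred at a nearby point, using the monotonicity formula (\Cref{lem: monotonicity}) and the time-derivative bound of \Cref{lem: L2 bound time}. Recall $e(U)=|\partial_tU|+|\nabla_xU|^2$. The two pieces are treated separately: the gradient part will give the linear term $t$ in $h$, and the $|\partial_tU|$ part the $\sqrt t$ term, through Cauchy--Schwarz.

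\textbf{Step 1 (lower bound of the Gaussian).} Shift time so that the cylinder $P_\rho^+(Z_0)$ sits inside a slab $T_{r}^+(\tilde Z)$ with $\tilde Z=(X_0,t_0+2\rho^2)$, say, and $r\simeq\rho$. On this slab, for $|X-X_0|\le\rho$ we have $\G^s_{\tilde Z}\gtrsim \rho^{-m-2+2s}$, with constants uniform for $s\in(s_0,1)$ since $\Gamma(s)$ is bounded there. Hence
\begin{equation*}
\rho^{2s-2-m}\int_{P_\rho^+(Z_0)}z^a|\nabla U|^2\lesssim \Phi_\eps(U,\tilde Z,r).
\end{equation*}
For the time derivative, I would use Cauchy--Schwarz with the weight $z^a$ on a set of weighted measure $\simeq\rho^{m+1+a+2}$:
\begin{equation*}
\rho^{2s-2-m}\int_{P_\rho^+}z^a|\partial_tU|\lesssim\Big(\rho^{2}\int_{T_r^+(\tilde Z)}z^a|\partial_tU|^2\G^s_{\tilde Z}\Big)^{1/2}.
\end{equation*}
\Cref{lem: L2 bound time} then bounds the right-hand side by $\Phi_\eps(U,\tilde Z,2r)$ plus a shifted Gaussian energy term, which is again of the same form, with centre moved by $r^2$. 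This produces $h$.

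\textbf{Step 2 (monotonicity and moving the centre).} By \Cref{lem: monotonicity}, $\Phi_\eps(U,\tilde Z,\cdot)$ is nondecreasing, so $\Phi_\eps(U,\tilde Z,2r)\le\Phi_\eps(U,\tilde Z,\sigma')$ for a fixed scale $\sigma'$ comparable to $\sigma$. The key comparison is between $\Phi_\eps(U,\tilde Z,\sigma')$ and $\Phi_\eps(U,0,\sigma)$. Since $|\tilde Z|\lesssim\sigma$ while we integrate over $t\in(-4\sigma'^2,-\sigma'^2)$ relative to $\tilde Z$, I would choose $\sigma'=K\sigma$ with $K$ large in terms of $\mu$. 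Then on the region $|X|\le R_\mu\sigma'$ the ratio $\G^s_{\tilde Z}/\G^s_0$ is bounded by $1+\mu$, uniformly in $s$ because the $s$-dependence of the ratio is only through the power $|t|^{-m/2-1+s}$ and the time shift is relatively small. Outside $|X|\ge R_\mu\sigma'$, the Gaussian is at most $\mu\,\sigma'^{-m-2+2s}$. That tail is controlled by $\sup_t\int z^a|\nabla U|^2\le E_0$ times the time length $\sigma'^2$, together with the potential term. Here the smallness of $\sigma$ is what lets $\sigma'^{2s-m}$ be absorbed. This yields $\Phi_\eps(U,\tilde Z,\sigma')\lesssim\Phi_\eps(U,0,\sigma)+\mu E_0$, after relabelling scales so that the statement's $\sigma$ is the relevant one.

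\textbf{Main obstacle.} The difficulty is Step 2, specifically the tail bound. For the potential term $W(u)$ there is no a priori global bound analogous to $E_0$. I would try to handle it by including $\int W(u_0)$ in the energy dissipation (the $F_s$ energy is nonincreasing along the flow), or by absorbing it into $E_0$ for admissible data. I also need to track that every Gaussian comparison is uniform as $s\nearrow1$; the only $s$-dependent constants are $\Gamma(s)$ and powers $\rho^{2s}$, which are harmless on $(s_0,1)$.
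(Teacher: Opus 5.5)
You follow the paper's proof essentially step for step. The ingredients are the same: the lower bound $\G^s_{(X_0,t_0+2\rho^2)}\gtrsim\rho^{2s-2-m}$ on $P^+_\rho(Z_0)$; Cauchy--Schwarz plus \Cref{lem: L2 bound time} for the $|\partial_tU|$ part, which is where $h$ comes from; and monotonicity with a comparison of Gaussians centred at $\tilde Z$ and at the origin, the tail paid by $\mu E_0$ (the paper's \eqref{eq: rem_ineq}).

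The step that fails as written is the end of Step~2. With $\sigma'=K\sigma$ and $K=K(\mu)$ large, the time slab of $\Phi_\eps(U,\tilde Z,\sigma')$ is $\{\tilde t-4K^2\sigma^2<t<\tilde t-K^2\sigma^2\}$. Since $\tilde t<\sigma^2$, this slab is disjoint from $T_\sigma=\{-4\sigma^2<t<-\sigma^2\}$ once $K^2\ge5$. Monotonicity only gives $\Phi_\eps(U,0,\sigma)\le\Phi_\eps(U,0,K\sigma)$, which is the wrong direction. So no comparison of Gaussians yields $\Phi_\eps(U,\tilde Z,\sigma')\lesssim\Phi_\eps(U,0,\sigma)+\mu E_0$. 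What your near/far splitting actually proves is $\Phi_\eps(U,\tilde Z,\sigma')\lesssim\Phi_\eps(U,0,2\sigma')+\mu\,\sigma'^{2s-m}E_0$. The slab mismatch $|\tilde t|\le\sigma'^2/K^2$ is handled by covering with two slabs $T_r$, $r\le2\sigma'$, and using monotonicity. The exponent in the tail also contradicts your absorption remark. Off $\{|X|\le R_\mu\sigma'\}$ one has $\G^s_{\tilde Z}\lesssim\mu\,\sigma'^{2s-2-m}$, and integrating $z^a|\nabla U|^2$ over a slab of length $3\sigma'^2$ gives $3\mu\,\sigma'^{2s-m}E_0$. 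For $m>2s$ this gets worse, not better, as the scale shrinks.

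The remedy is to fix the outer scale $\sigma'\simeq1$ and let only the inner scale $\sigma=\sigma'/K(\mu)$ shrink with $\mu$. This is the normalisation in the proof of \Cref{thm: small energy GL}, where $R=1$ and $\Phi_\eps(U,0,1)\le\eta_0$. The error is then $\mu E_0$, and the conclusion reads $h(\Phi_\eps(U,0,1)+\mu E_0)$ instead of $h(\Phi_\eps(U,0,\sigma)+\mu E_0)$. Your ``relabelling'' amounts to this, but it changes the hypothesis to $P^+_{2\rho}(Z_0)\subset P^+_{\sigma/K}$. The scale separation cannot be dropped. An inequality $\G^s_{\tilde Z}\le C\G^s+(\text{small})$ on the slab of $\tilde Z$, with $C$ independent of $\mu$, requires that slab to have size $\gg|\tilde X|+|\tilde t|^{1/2}$. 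But $P^+_\sigma$ allows $|\tilde X|+|\tilde t|^{1/2}\sim\sigma$, including $\tilde t>0$. The paper's \eqref{eq: rem_ineq}, quoted from \cite{HyderSegattiSireWang}, hides the same separation, and the outer-scale version is all the proof of the theorem needs.

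Your treatment of the potential tail via the decay of $F_s$ is correct, and the paper passes over it: $\sup_t\int_{\R^m}W(u(\cdot,t))\le F_s(U_0)$, which equals $E_0/2$ when $|u_0|=1$.

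Finally, your uniformity claim is inaccurate. Since $\int_{P^+_\rho}z^a\simeq\rho^{m+4-2s}/(2(1-s))$, your Cauchy--Schwarz step costs a factor $(1-s)^{-1/2}$ in the square-root term. The paper's H\"older step hides the same factor, because the $z^a$-mass of $\G^s(\cdot,t)$ is $1/\delta_s$. This is neutralised only by measuring with $\frac{\delta_s}{2}z^a\,dX$, as in the appendix.
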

\begin{proof}
	We start by noting that 
	\begin{equation}\label{eq: shifted gaussian bound}
		\rho^{2s-2-m}\lesssim \G^s_{(X_0,t_0+2\rho^2)}(X,t),\quad\text{on }P_\rho^+(Z_0).
	\end{equation}
	Indeed, for $(X,t)\in P_{\rho}^+(Z_0)$, 
    \begin{equation*}
        |X-X_0|\leq \rho^2,\quad \rho^2\leq t_0+2\rho^2-t\leq 3\rho^2,
    \end{equation*}
    thus 
    \begin{equation*}
        \exp\left(-\frac{|X-X_0|}{4|t_0+2\rho^2-t|}\right)\geq e^{-1/4},\quad |t_0+2\rho^2-t|^{s-1-m/2}\geq (3\rho^2)^{s-1-m/2}
    \end{equation*}
    and hence \eqref{eq: shifted gaussian bound} holds. Thus 
    \begin{equation*}
    	\rho^{2s-2-m}\int_{P_\rho^+(Z_0)}z^ae(U)(X,t)dXdt\lesssim \int_{P_\rho^+(Z_0)}z^ae(U)(X,t)\G^s_{(X_0,t_0+2\rho^2)}dXdt.
    \end{equation*}
    Next, using that $P_\rho^+(Z_0)\subset T^+_\rho(t_0+2\rho^2)$ and the definition of $e(U)$
    \begin{equation*}
        \begin{split}
        	\int_{P_\rho^+(Z_0)}z^ae(U)(X,t)\G^s_{(X_0,t_0+2\rho^2)}dXdt&\leq \int_{T^+_\rho(t_0+2\rho^2)}z^a|\nabla U|^2(X,t)\G^s_{(X_0,t_0+2\rho^2)}dXdt\\
        	&+\int_{T^+_\rho(t_0+2\rho^2)}z^a|\partial_t U|(X,t)\G^s_{(X_0,t_0+2\rho^2)}dXdt
        \end{split}
    \end{equation*}
    Using H\"older inequality and \Cref{lem: L2 bound time}
    \begin{equation*}
    	\begin{split}
    		\left(\int_{T^+_\rho(t_0+2\rho^2)}z^a|\partial_t U|\G^s_{(X_0,t_0+2\rho^2)}\right)^2&\lesssim  \rho^2\int_{T^+_\rho(t_0+2\rho^2)}z^a|\partial_t U|^2\G^s_{(X_0,t_0+2\rho^2)}\\
    		&\lesssim \Phi_\eps (U,(X_0,t_0+2\rho^2),2\rho)\\
    		&+\int_{T^+_\rho(t_0+2\rho^2)}z^a|\nabla U|^2\G^s_{(X_0,t_0+3\rho^2)}
    	\end{split}
    \end{equation*}
    Now we invoke the inequalities 
    \begin{equation}\label{eq: rem_ineq}
    \begin{split}
\G^s_{(X_0,t_0+3\rho^2)}&\lesssim  \G^s_{(X_0,t_0+2\rho^2)}+\mu \sigma^{-2}\\
        \G^s_{(X_0,t_0+2\rho^2)}&\lesssim \G^s+\mu\sigma^{-2}
    \end{split}
    \end{equation} 
    which are standard and are essentially contained in \cite[Lemma 4.4]{HyderSegattiSireWang}. By \eqref{eq: rem_ineq} and monotonicity, 
    we find 
    \begin{equation*}
    	\begin{split}
    		\int_{T^+_\rho(t_0+2\rho^2)}z^a|\nabla U|^2\G^s_{(X_0,t_0+\ell\rho^2)}&\lesssim \int_{T_\sigma}z^a|\nabla U|^2\G^s+\mu \sigma^{-2}\int_{T_\sigma}z^a|\nabla U|^2\\
    		&\lesssim \Phi_\eps(U,0,\sigma)+\mu E_0.
    	\end{split}
    \end{equation*}
    Similarly, we obtain 
    \begin{equation*}
    	\Phi_\eps (U,(X_0,t_0+2\rho^2),2\rho)\lesssim \Phi_\eps(U,0,\sigma)+\mu E_0.
    \end{equation*}
    This concludes the proof. 
\end{proof}

\subsection{Schoen's argument and small energy regularity}\label{sec: Schoen}
With \Cref{lem: clearing out}, \Cref{lem: control pot by e} and \Cref{lem: smallness} we are ready to prove \Cref{thm: small energy GL}. 

\begin{proof}[Proof of \Cref{thm: small energy GL}]
    By scaling, we may suppose $R=1$. For any $\delta\in (0,\tfrac12)$ the map 
	\begin{equation*}
		[0,\delta]\ni \sigma\mapsto (\delta-\sigma)^2\max_{\overline P^+_\sigma}e(U)
	\end{equation*}
	is continuous. Suppose that it realises its maximum in $\sigma_0\in[0,\delta]$ and let $Z_0=(X_0,t_0)\in \overline{P}^+_{\sigma_0}$ be such that
	\begin{equation*}
		e_0\coloneqq \max_{\overline P^+_{\sigma_0}}e(U)=e(U)(Z_0).
	\end{equation*}
	We claim that $Z_0\in P_{\sigma_0}$. This follows from the fact that $U$ can be globally represented as the convolution of its trace with a unit mass kernel \cite[Theorem 1.7]{StingaTorrea}, thus $U$, $\nabla_xU$ and $\partial_tU$ all attain their maximum on $\{z=0\}$. Now, set $\rho_0=\frac{1}{2}(\delta-\sigma_0)$ and observe that 
	\begin{equation*}
		\sup_{B_{\rho_0}^+(Z_0)}e(U)\leq \sup_{B_{\rho_0+\sigma_0}^+(0)}e(U)\leq \left(\frac{\delta-\sigma_0}{\delta-(\rho_0+\sigma_0)}\right)^2e_0\leq 4e_0
	\end{equation*}
	Set $r_0=\sqrt{e_0}\rho_0$ and consider the map $V\in C^\infty(P_{r_0}^+)$ defined by 
	\begin{equation*}
		V(Y,\tau)\coloneqq U\left(\frac{Y}{\sqrt{e_0}}+X_0,\frac{\tau}{e_0}+t_0\right)
	\end{equation*}
    It is immediate to see that 
    \begin{equation*}
        e(V)(Y,\tau)=e_0^{-1}e(U)\left(\frac{Y}{\sqrt{e_0}}+X_0,\frac{\tau}{e_0}+t_0\right)
    \end{equation*}
    and thus that $e(V)(0,0)=1$.
    Moreover,
	\begin{equation*}
		\sup_{P_{r_0}^+(0)}e(V)=e_0^{-1}\sup_{P^+_{\rho_0}(Z_0)}e(U)\leq 4
	\end{equation*}
	hence $e(V)$ is uniformly bounded by an absolute constant in $P_{r_0}^+(0)$. Next, note that since $U$ solves \eqref{eq: GL flow}, then setting 
    \begin{equation*}
        Y=(y,\zeta)=\sqrt{e_0}(X-X_0),\quad \tau=e_0(t-t_0)
    \end{equation*}
    then $V$ solves 
    \begin{equation}\label{eq: eq V}
        \begin{cases}
            \zeta^{a}\partial_\tau V=\div_Y(\zeta^a\nabla_YV)&\text{in }P^+_{r_0}\\
            -\partial_\zeta^a\vert_{\zeta=0}V=(\eps\sqrt{e_0})^{-2s}(1-|V|^2)V&\text{on }\partial ^0P^+_{r_0}.
        \end{cases}
    \end{equation}
    We claim that $r_0\leq 1$. If this was false, then since $V$ solves \eqref{eq: eq V}, by \Cref{lem: Bochner} it holds
    \begin{equation*}
        \begin{cases}
            \zeta^a\partial_\tau e(V)-\div(\zeta^a\nabla e(V))\leq 0&\text{in }P^+_{r_0}\\
            -\partial_\zeta^ae(V)\leq \|(\eps\sqrt{e_0})^{-2s}(1-|v|^2)\|_{L^\infty(P_{r_0}^+(Z_0))}e(V)&\text{on }\partial ^0P^+_{r_0}.
        \end{cases}
    \end{equation*}
    By \Cref{lem: control pot by e}, we know that there exists $c_1>0$ depending only on the universal parameters such that 
    \begin{equation*}
        \left\Vert\frac{(1-|v|^2)}{(\eps\sqrt{e_0})^{2s}}\right\Vert_{L^\infty(P_{r_0/2}^+(Z_0))}\lesssim\, c_1(\|e(V)\|^2_{L^\infty(P_{r_0}^+(Z_0))}+1)\leq 17\,c_1.
    \end{equation*}
 Hence, we can apply Harnack's inequality \Cref{lem: Harnack} in $P_{1/2}^+\subset P_{r_0/2}^+ $, 
    \begin{equation*}
        1=e(V)(0,0)\lesssim \int_{P^+_{1/2}}\zeta^a e(V).
    \end{equation*}
    Changing variable, we find 
    \begin{equation*}
        1\lesssim \left(\frac1{\sqrt{e_0}}\right)^{2s-2-m}\int_{P_{1/2\sqrt{e_0}}^+(Z_0)}z^ae(U).
    \end{equation*}
    Next, we apply \Cref{lem: smallness} with $\sigma=\sigma_0$ and $\rho=\frac1{2\sqrt{e_0}}<\sigma<\delta$ to ensure that, given any $\mu>0$,
    \begin{equation*}
        1\lesssim \left(\eta_1+\mu E_0+\sqrt{\eta_1+\mu E_0}\right)
    \end{equation*}
    up to choosing $\delta$ small enough. This leads to a contradiction if we pick $\eta_1$ and $\mu$ sufficiently small, uniformly in $s\in (s_0,1)$. Thus $r_0\leq 1$ and hence 
\begin{equation*}
   \frac{\delta^2}4\sup_{P^+_{\delta/2}}e(U)\leq (\delta-\sigma_0)^2\sup_{P^+_{\sigma_0}}e(U)\leq 4\rho_0^2e_0\leq 4 
\end{equation*}
which concludes the proof.
\end{proof}

\section{Fractional harmonic maps}
We now turn our attention to the proof of \Cref{thm: small energy FHM}, which we divide in two steps. The first is a uniform small energy quantitative regularity for critical points satisfying a monotonicity formula under a qualitative smoothness assumption.

\begin{lemma}\label{lem: eps reg smooth FHM}
    There exist positive constants $\eps_1,\delta$ and $C$ depending only on $m,\ell$ and $s_0$ such that the following holds. Let $U\in C^\infty(\R^{m+1}_+,\R^\ell)$ be a solution to \eqref{eq: FHM br} for some $s\in (s_0,1)$ satisfying \eqref{eq: FHM monotonicity}. If, for some $x_0\in \R^{m}$ and $r>0$ 
    \begin{equation*}
        \Psi_s(U,x_0,r)\leq \eps_1
    \end{equation*}
    then 
    \begin{equation*}
        \sup_{B_{\delta r}} r^2|\nabla u|^2\leq C\delta^{-2}\   \Psi_s(U,x_0,r).
    \end{equation*}
\end{lemma}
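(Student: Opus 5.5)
We follow the scheme of the proof of \Cref{thm: small energy GL} (Schoen's argument), now in the elliptic setting. By scaling and translation we take $r=1$ and $x_0=0$. The map $U$ is smooth, so for $\delta\in(0,\tfrac12)$ the function $\sigma\mapsto(\delta-\sigma)^2\max_{\overline B^+_\sigma}|\nabla U|^2$ is continuous on $[0,\delta]$. Let $\sigma_0$ be a maximum point, and let $X_0\in\overline B^+_{\sigma_0}$ attain $e_0:=\max_{\overline B^+_{\sigma_0}}|\nabla U|^2$. Set $\rho_0=\tfrac12(\delta-\sigma_0)$. Then $\sup_{B^+_{\rho_0}(X_0)}|\nabla U|^2\le 4e_0$. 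It suffices to prove $\rho_0^2e_0\lesssim \Psi_s(U,0,1)$, since this gives the stated bound with $C\delta^{-2}$.

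Suppose instead that $\rho_0^2e_0\geq 1$ (more precisely, larger than a fixed multiple of $\Psi_s(U,0,1)$). Rescale by setting $V(Y)=U(X_0+Y/\sqrt{e_0})$ on $B^+_{r_0}$, where $r_0=\sqrt{e_0}\rho_0\ge1$. Then $|\nabla V(0)|=1$ and $|\nabla V|^2\le4$ on $B^+_{r_0}$. The point $X_0$ may lie in the interior, so we split into two cases. If $\operatorname{dist}(X_0,\{z=0\})\geq\rho_0/2$, interior estimates for the degenerate operator $\div(z^a\nabla\cdot)$ apply, since the weight is comparable to a constant there. In that case mean-value inequalities for the subsolution $|\nabla V|^2$ give $1\lesssim \int_{B_{1/2}}\zeta^a|\nabla V|^2$ directly. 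Otherwise we recentre at the boundary point below $X_0$.

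At a boundary point we need a Bochner inequality for \eqref{eq: FHM br}, the analogue of \Cref{lem: Bochner}. Since $\div(z^a\nabla U)=0$, the densities $|\nabla_xU|^2$ and $|z^a\partial_zU|^2$ (suitably weighted) are subsolutions in the interior. The Neumann data are $-\partial^a_z\nabla_x U=\nabla_x(|d_su|^2u)$, and using $|u|=1$ one should obtain
\[
-\partial^a_z|\nabla_xU|^2\leq 2|d_su|^2|\nabla_x u|^2 .
\]
The coefficient $|d_su|^2$ must then be bounded by $\sup|\nabla V|^2+1$ uniformly in $s$, in the way \Cref{lem: control pot by e} bounds $\pg u$. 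We split the integral defining $|d_s v|^2$ into the region $|x-y|<1$, where $|v(x)-v(y)|\le\|\nabla v\|_\infty|x-y|$, and its complement, where $|v|=1$. Both pieces are bounded uniformly because $c_{m,s}\lesssim 1-s$ cancels the $(1-s)^{-1}$ coming from $\int_0^1\rho^{1-2s}\,d\rho$. With this potential bounded, a uniform-in-$s$ Harnack (mean-value) inequality for subsolutions with bounded Robin-type boundary coefficient (the elliptic version of \Cref{lem: Harnack}) gives
\[
1=|\nabla V|^2(0)\lesssim\int_{B^+_{1/2}}\zeta^a|\nabla V|^2 .
\]
Here we use that $\partial_zV$ is controlled by tangential derivatives, exactly as in the $\Xi_s$ bound. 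Scaling back gives $1\lesssim e_0^{(m-2s)/2}\int_{B^+_{1/(2\sqrt{e_0})}(X_0)}z^a|\nabla U|^2$. By the monotonicity \eqref{eq: FHM monotonicity} centred at the projection of $X_0$ (a point in $B_\delta$), the right-hand side is at most $\Psi_s(U,\cdot,1/2)\lesssim\Psi_s(U,0,1)\le\eps_1$. This is a contradiction once $\eps_1$ is small. Running the same argument with the threshold proportional to $\Psi_s$ yields the linear dependence on $\Psi_s(U,x_0,r)$ claimed in \Cref{lem: eps reg smooth FHM}.

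The main obstacle is the boundary Bochner–Harnack step, which must be uniform as $s\nearrow1$. Three things need care: controlling the normal derivative of $V$ at the boundary, obtaining the nonlocal potential bound $|d_sv|^2\lesssim\|\nabla v\|^2_\infty+1$ with constants independent of $s$, and having the weighted Harnack inequality hold uniformly for $a\in(-1,1-2s_0)$. I would first try to adapt \Cref{lem: Harnack} by freezing time in its proof.
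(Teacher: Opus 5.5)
\textbf{There is a genuine gap: you run the point-picking on the wrong density.} Your ingredients are the paper's: point-picking, the Bochner inequality for $|\nabla_xU|^2$ with Robin coefficient $2|d_su|^2$, the uniform bound $|d_sv|^2\lesssim\|\nabla v\|_\infty^2+1$ (your splitting and the cancellation $c_{m,s}\lesssim 1-s$ are exactly right), the uniform Harnack inequality, and monotonicity. The problem is that you maximise the full gradient $|\nabla U|^2$ over extended half-balls.

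On the thin space $z^a\partial_zU\to-|d_su|^2u$, which is nonzero unless $u$ is constant. So for $s<\tfrac12$ (i.e.\ $a>0$) you get $|\partial_zU|\sim z^{-a}|d_su|^2\to\infty$, and $\max_{\overline B^+_\sigma}|\nabla U|^2=+\infty$ for every $\sigma>0$. The maximisation is then void on $s\in(s_0,\tfrac12)$, a range the lemma must cover when $s_0<\tfrac12$.

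For $s>\tfrac12$ the density is bounded, but it is not a subsolution near $\{z=0\}$. With the boundary profile $\partial_zU\approx c\,z^{-a}$ one gets $\div(z^a\nabla|\partial_zU|^2)\approx 2a(1+a)|c|^2z^{-a-2}<0$. The contribution of $|\nabla_xU|^2$ is only of order $z^{-a}+z^{a}$, so it does not compensate. Also, $z^a\partial_zU$ solves the conjugate equation $\div(z^{-a}\nabla\cdot)=0$, so no weighting of your two pieces gives a subsolution of $\div(z^a\nabla\cdot)$. Hence Harnack cannot be applied to the quantity you normalised by $|\nabla V(0)|=1$. The claim that ``$\partial_zV$ is controlled by tangential derivatives, exactly as in the $\Xi_s$ bound'' is unsupported: that bound controls the nonlocal potential, not $\partial_zV$.

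The interior case does not close either. If $X_0$ sits at height $z_0\gg\rho\coloneqq 1/(2\sqrt{e_0})$, containing $B_\rho(X_0)$ in a half-ball centred at its projection costs a factor $((z_0+\rho)/\rho)^{m-2s}$. So $\rho^{2s-m}\int_{B_\rho(X_0)}z^a|\nabla U|^2$ is not bounded by the boundary monotone quantity. Moreover, your threshold $\rho_0/2$ leaves heights between $e_0^{-1/2}$ and $\rho_0/2$. For those, recentring at the boundary point puts the origin outside the unit-scale Harnack region.

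\textbf{The fix.} Work with $e(U)=|\nabla_xU|^2$ and take the maximum point on the thin space, as the paper intends. The cleanest way is to maximise $\sigma\mapsto(\delta-\sigma)^2\max_{\overline B_\sigma}|\nabla u|^2$ over balls $B_\sigma\subset\R^m$, with picked point $\bar x$. Everything downstream uses only thin information:
\begin{itemize}
\item the bound on $|d_sv|^2$ needs only $|\nabla v|^2\le 4$ on $B_1\subset\R^m$ and $|v|=1$;
\item Harnack is applied to the nonnegative subsolution $|\nabla_xV|^2$ at the thin point $0$, where $|\nabla_xV(0)|^2=1$;
\item in the resulting integral $|\nabla_xV|^2\le|\nabla V|^2$, and monotonicity centred at $\bar x$ itself gives the bound $\Psi_s(U,\bar x,\tfrac12)\lesssim\Psi_s(U,0,1)$.
\end{itemize}
No interior case arises.

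For the linear dependence on $\Psi_s$, lowering the threshold to a multiple of $\Psi_s$ does not work: $r_0$ may then be smaller than $1$, and the unit-scale Harnack is unavailable. Instead, once $\rho_0^2e_0\le 1$ is established, rescale by $\rho_0$, setting $W(Y)=U(\bar x+\rho_0Y)$, so that $|\nabla_xW|^2\le 4$ on $B_1$. Applying Harnack once more gives $\rho_0^2e_0\lesssim\Psi_s(U,\bar x,\rho_0/2)\lesssim\Psi_s(U,0,1)$.
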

\begin{proof}
    The proof follows essentially the same steps of \Cref{thm: small energy GL}, some of which are easier. First, we establish a Bochner inequality for the horizontal energy density 
    \begin{equation*}
        e(U)\coloneqq |\nabla_x U|^2.
    \end{equation*}
    Essentially the same calculations as \Cref{lem: Bochner} yield that for solutions to \eqref{eq: FHM br} it holds 
    \begin{equation*}
        \begin{cases}
            -\div(z^a\nabla e(U))\leq 0&\text{in }\R^{m+1}_+\\
            -\partial_z^a\vert_{z=0}e(U)\leq |d_su|^2e(U)&\text{on }\R^m.
        \end{cases}
    \end{equation*}
    This, plus the simple bound
    \begin{equation*}
        \sup_{B_{1/2}(x_0)}|d_su|^2\lesssim \|e(U)\|^2_{L^\infty(B^+_1)}+1
    \end{equation*}
    analogous to \Cref{lem: control pot by e}, yields 
    \begin{equation*}
        \begin{cases}
            -\div(z^a\nabla e(U))\leq 0&\text{in }B_{1/2}^+\\
            -\partial_z^a\vert_{z=0}e(U)\leq A (\|e(U)\|^2_{L^\infty(B^+_1)}+1)e(U)&\text{on }B_{1/2}.
        \end{cases}
    \end{equation*}
    Applying Schoen's argument with Harnack's inequality gives the conclusion, exactly as in \Cref{sec: Schoen}.
\end{proof}
Next, we employ \Cref{lem: eps reg smooth FHM} to show that below an energy threshold, critical points are smooth. To this end recall that thanks to the monotonicity formula every blow-up sequence $U_k\coloneqq U(X_0+\rho_k X)$ weakly converges to a map, called, tangent map as $\rho_k\to 0$, which is positively 0-homogeneous and solves the same equation, see \cite[Section 7.2]  {MillotPegonSchikorra}.
\begin{lemma}\label{lem: smooth}
    There exists $0<\eps_2<\eps_1$ depending only on $m,\ell$ and $s_0$ such that if $U\in H^1_a(\R^m,\R^\ell)$ is a weak solution to \eqref{eq: FHM br} for $s\in (s_0,1)$ and, for some $x_0\in R^m$ and $r>0$, 
    \begin{equation*}
        \Psi_s(U,x_0,r)\leq \eps_2
    \end{equation*}
    then $U\in C^\infty(B_{r/2}^+,\R^\ell)$.
\end{lemma}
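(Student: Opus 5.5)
Throughout we assume, as the authors do, that $U$ is stationary, so that \eqref{eq: FHM monotonicity} holds. The plan is to combine three ingredients: the non-uniform $\eps$-regularity of \cite[Theorem 5.1]{MillotPegonSchikorra}, the uniform quantitative estimate of \Cref{lem: eps reg smooth FHM}, and a dimension-reduction argument. The latter removes the dependence on $s$ that the first one introduces.

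\textbf{Step 1: reduce to points.} By \eqref{eq: FHM monotonicity}, $\Psi_s(U,x,r/2)\le 2^{m-2s}\Psi_s(U,x_0,r)\lesssim \eps_2$ for every $x\in B_{r/2}(x_0)$. It therefore suffices to show that every such $x$ is a regular point, meaning that $U$ is smooth in a neighbourhood of $x$. Interior smoothness of $U$ in $\{z>0\}$ is automatic, since $U$ solves a degenerate elliptic equation that is uniformly elliptic away from $z=0$. Boundary smoothness near $x$ follows from $C^\infty$ regularity of the trace $u$ near $x$, using \cite{MillotPegonSchikorra}.

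\textbf{Step 2: regularity of tangent maps below threshold.} At a point $x$, take any tangent map $\Theta$ obtained as a weak limit of the blow-ups $U(x+\rho_kX)$. The map $\Theta$ is $0$-homogeneous, stationary, satisfies \eqref{eq: FHM monotonicity}, and has density
\[
\Psi_s(\Theta,0,1)=\lim_{\rho\to0}\Psi_s(U,x,\rho)\lesssim\eps_2 .
\]
We show by induction on the dimension $m$ that such a $\Theta$ is constant when $\eps_2$ is small and uniform in $s\in(s_0,1)$. Away from the origin, $\Theta$ is invariant in the radial direction. By monotonicity, its density at any boundary point $y\ne0$ is bounded by a dimensional multiple of $\Psi_s(\Theta,0,1)$.

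Blowing up at such $y$ produces a tangent map that splits off a line, so it is effectively a map in dimension $m-1$ with small energy. By the inductive hypothesis this map is constant. In the base case $m=1$, the only $0$-homogeneous map of small energy is constant. The inductive step works because in that case $u=\Theta|_{z=0}$ is piecewise constant with values in $\Sp^\ell$, and a nonconstant jump carries energy bounded below uniformly for $s\in(s_0,1)$.

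Knowing that all tangent maps at $y\ne0$ are constant, we invoke the non-uniform \cite[Theorem 5.1]{MillotPegonSchikorra} at the fixed value of $s$. This gives smoothness of $\Theta$ near each $y\neq0$, because at small scales the energy around $y$ falls below the $s$-dependent threshold; only qualitative smoothness is being used here. Hence $\Theta$ is smooth on $B_2\setminus B_{1/2}$ (boundary part included). Applying \Cref{lem: eps reg smooth FHM} on balls centred at points $y$ with $|y|=1$ and of radius comparable to $1$ then gives
\[
|\nabla_x\theta|^2(y)\lesssim \eps_2 .
\]
Since $\theta$ is $0$-homogeneous, $|\nabla\theta|\sim|y|^{-1}$. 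The resulting smallness of $|\nabla\theta|$ on the unit sphere, together with the Euler--Lagrange equation and the constraint $\theta\in\Sp^\ell$, should force $\theta$ to be constant. I would try to prove this through an energy-gap argument: small energy implies that the image of $\theta$ lies in a small geodesic ball, and a convexity (maximum principle) argument for $|\theta-p|^2$ then yields constancy.

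\textbf{Step 3: conclude.} Since every tangent map at $x$ is constant, $\lim_{\rho\to0}\Psi_s(U,x,\rho)=0$. For $\rho$ small, $\Psi_s(U,x,\rho)$ is therefore below the $s$-dependent threshold of \cite[Theorem 5.1]{MillotPegonSchikorra}, which gives smoothness of $U$ near $x$. Note that the smallness needed for this is supplied by the limit $\rho\to0$, not by $\eps_2$, so the threshold $\eps_2$ stays uniform in $s$.

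\textbf{Main obstacle.} I expect the main obstacle to be Step 2: showing that tangent maps of density below a fixed $s$-independent threshold are constant. The difficulty is to organise the induction so that every threshold used depends only on $m,\ell,s_0$, with the $s$-dependent result entering only qualitatively.
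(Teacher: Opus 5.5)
Your proposal is essentially correct and uses the same ingredients as the paper, arranged differently. Those ingredients are tangent maps and dimension reduction, \cite[Theorem 5.1]{MillotPegonSchikorra} used only qualitatively, the uniform estimate of \Cref{lem: eps reg smooth FHM} on an annulus, and a convexity/maximum-principle argument on the link.

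The paper argues by contradiction at a singular point. It takes a tangent map there, singular at the origin by \cite[Theorem 5.1]{MillotPegonSchikorra}. It then cites \cite[Lemma 7.12]{MillotPegonSchikorra} to reduce to a cone $W$ on $\R^{n+1}_+$ with $n\geq 2$, smooth on $B_2\setminus B_{1/2}$. Finally it shows that $\rho\circ w$, with $\rho$ the geodesic distance to $q$, is subharmonic for the averaged link operator $(-\Delta)^s_{\Sp^{n-1}}$.

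You instead prove directly, by induction on $m$, that cones of density below a uniform threshold are constant. You do the dimension reduction by hand: the density at $y\neq 0$ is at most $\Psi_s(\Theta,0,1)$, and tangent maps at $y$ split off a line. This is more self-contained, and it treats explicitly the disconnected link $\Sp^0$, which the paper removes by citation.

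Your choice of $|\theta-p|^2=2-2p\cdot\theta$ also works, and is simpler than the paper's link computation. Since $(-\Delta)^s\theta=|d_s\theta|^2\theta$ for sphere-valued solutions, one has $(-\Delta)^s(p\cdot\theta)=(p\cdot\theta)|d_s\theta|^2\geq 0$ once $\theta$ takes values in the open hemisphere around $p$. By $0$-homogeneity, a minimum of $p\cdot\theta$ is attained on the unit sphere, where $\theta$ is smooth. Evaluating there forces $|d_s\theta|^2=0$, i.e.\ $\theta$ is constant, with no need for the kernel $K_s(\theta,\phi)$.

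Two points need fixing.

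First, your base-case justification does not hold as stated. For $s<\tfrac12$ the jump $p\chi_{\{x>0\}}+q\chi_{\{x<0\}}$ has finite energy of order $|p-q|^2$. This is arbitrarily small when $q$ is close to $p$, so jumps per se carry no energy gap. What saves you is the equation. For $x>0$, $(-\Delta)^s\theta(x)$ is a positive multiple of $x^{-2s}(p-q)$, and requiring it to be parallel to $p$ forces $q\in\{p,-p\}$. The antipodal jump has density bounded below uniformly for $s\in(s_0,\tfrac12)$. For $s\geq\tfrac12$ any jump has infinite energy near the origin. Note that for $s<\tfrac12$ the antipodal jump is a genuine $0$-homogeneous weak solution, so it is this energy bound, not a non-existence statement, that excludes it.

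Second, in Steps 2 and 3 you pass from ``every tangent map at $y$ is constant'' to ``$\lim_{\rho\to0}\Psi_s(\Theta,y,\rho)=0$''. Weak convergence of blow-ups only gives lower semicontinuity of the energy. This step therefore needs blow-ups to converge without loss of energy, and you should say where that comes from. The paper relies on the same input, implicitly, when it asserts that the tangent map at a singular point is singular at the origin.
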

\begin{proof}
We can suppose $r=1$ by scaling. Suppose by contradiction that there is one point $x_0\in B_{1/2}$ where $U$ is singular. Let $V$ be any tangent map of $U$ at $x_0$. By \cite[Theorem 5.1]{MillotPegonSchikorra}, $V$ is singular in the origin. Moreover, $V$ has at most $m-2$ directions of translation invariance \cite[Lemma 7.12]{MillotPegonSchikorra}, in the sense that there exists a homogeneous map $W\colon\R^{n+1}_+\to \Sp^\ell$ such that 
\begin{equation*}
    V(x,z)=W(x',z),\quad x=(x',x'')\in \R^{n}\times \R^{m-n}
\end{equation*}
where $2\leq n\leq m$. Note that $W$ is smooth in $B_{2}\setminus B_{1/2}$. Thus, for any $\eta>0$ we can pick $\eps_2$ sufficiently small such that, using a covering argument and \Cref{lem: eps reg smooth FHM} 
\begin{equation*}
    \|w-q\|_{L^\infty(B_{3/2}\setminus B_{3/4})}<\eta.
\end{equation*}
where $w=\Tr W$. Hence, looking at the link over $\Sp^{n-1}$
\begin{equation}\label{eq: small ball image}
    w(\Sp^{n-1})\subset B_\eta(q).
\end{equation}
The $s$-harmonicity on 0-homogeneous maps transfers to the link with an averaged kernel, in the sense that, setting $w(r,\theta)=g(\theta)$
\begin{equation*}
    \begin{split}
        \int_{\R^n}\frac{w(x)-w(y)}{|x-y|^{n+2s}}dy&= \int_{\Sp^{n-1}}(g(\theta)-g(\phi))K_s(\theta,\phi)dV_{\Sp^{n-1}}(\phi)\eqqcolon (-\Delta)^s_{\Sp^{n-1}}g
    \end{split}
\end{equation*}
where 
\begin{equation}\label{eq: Ks}
    K_s(\theta,\phi)\coloneqq \int_0^\infty\frac{t^{n-1}}{(1+t^2-2t\theta\cdot\phi)^\frac{n+2s}{2}}dt
\end{equation}
and thus we obtain 
\begin{equation}\label{eq: orth}
    (-\Delta)^s_{\Sp^{n-1}}g\perp T_{g}\Sp^{n-1}.
\end{equation}
Let now $\rho\colon B_\eta (q)\cap \Sp^{\ell}\to \R$ be the geodetic distance to the point $q\in\Sp^\ell$. For $\eta>0$ sufficiently small, this is a strictly convex function and can be extended (without renaming) as a strictly convex function to the whole $B_\eta(q)$. Let now $f\coloneqq \rho\circ w\colon \Sp^{n-1}\to\R$, which is well defined by \eqref{eq: small ball image}. By strict convexity, there exists a constant $c>0$ such that 
\begin{equation*}
	\rho(w(x))-\rho(w(y))\leq \nabla\rho(w(x))\cdot (w(x)-w(y))-c|w(x)-w(y)|^2.
\end{equation*}
Passing to the angular function $w(x)=g(\theta)$ and integrating against \eqref{eq: Ks}, we get 
\begin{equation}\label{eq: KW arg}
     (-\Delta)^s_{\Sp^{n-1}}f(\theta)\leq \nabla \rho (g(\theta))\cdot (-\Delta)^s_{\Sp^{n-1}}g(\theta)-c\int_{\Sp^{n-1}}|g(\theta)-g(\phi)|^2K_s(\theta,\phi)dV_{\Sp^{n-1}}(\phi)
\end{equation}
and since $\nabla\rho\circ g\in T_u\Sp^{n-1}$ we obtain, using \eqref{eq: orth}, that $ (-\Delta)^s_{\Sp^{n-1}}f\leq 0$. Thus, $f$ is constant and by \eqref{eq: KW arg} $g$ is constant too. We reached a contradiction and the proof is concluded.
\end{proof}

\begin{proof}[Proof of \Cref{thm: small energy FHM}]
    It is sufficient to combine \Cref{lem: eps reg smooth FHM} and \Cref{lem: smooth}.
\end{proof}

\appendix
\section{Uniform Harnack inequality}

The following result is a uniform-in-$s$ version of the $L^1$-to-$L^\infty$ Harnack inequality for positive subsolutions of the problem 
\begin{equation}\label{eq: subsol}
	\begin{cases}
		z^a\partial_tf-\div(z^a\nabla f)\leq 0&\text{in } P_1^+\\
		-\partial_z^a\vert_{z=0}f\leq C_0f&\text{on } P_1
	\end{cases}
\end{equation}
where, for $r>0$, $P_r^+\coloneqq P_r\times (0,r)$\footnote{We work with cylindrical extensions rather than half-ball extensions just out of convenience, the results remain true up to absolute constants.}.
\begin{lemma}[Harnack inequality]\label{lem: Harnack}
	Let $f\geq 0$ solve \eqref{eq: subsol} for some $s\in (s_0,1)$. Then, there is a constant $C>0$ depending only on $m,s_0$ and $C_0$ such that 
	\begin{equation*}
		f(0,0)\leq C\int_{P_1^+}z^a f(X,t)dXdt.
	\end{equation*}
	\begin{proof}
		The statement follows by a standard Moser iteration scheme with an extra step to control the boundary term. 
        We sketch the proof for completeness. 
		
		\medskip
		
		Let $0<\rho<R\le 1$, let $\eta\in C_c^\infty(P_R)$ satisfy
\begin{equation*}
	0\le \eta\le 1,\qquad \eta\equiv 1\ \text{on }P_\rho,\qquad
|\nabla\eta|+|\partial_t\eta|^{1/2}\lesssim (R-\rho)^{-1},
\end{equation*}
and fix $p\ge 1$. Testing \eqref{eq: subsol} with $\phi=\eta^2 f^{p-1}$ gives the energy estimate
\begin{equation}\label{eq:caccioppoli-pre}
\begin{split}
	\sup_{t\in(-R^2,R^2)}\int_{B_R\times(0,R)} z^a \eta^2 f^p
&+\int_{P_R^+} z^a |\nabla(\eta f^{p/2})|^2
\\
&\lesssim
\frac{p^2}{(R-\rho)^2}\int_{P_R^+} z^a f^p
+C_0 p\int_{P_R} (\eta f^{p/2})^2 .
\end{split}
\end{equation}
The last term can be reabsorbed in the right-hand side by the weighted trace inequality
\begin{equation*}
    \int_{B_R}w^2\leq C\eps R^{2s} \int_{B_R^+}z^a|\nabla w|^2+C\eps^{-1}R^{-2(1-s)}\int_{B_R^+}z^aw^2
\end{equation*}
whose constant we claim to be uniform in $s\in(s_0,1)$.
Accepting this for the moment, the conclusion follows by integrating over time and choosing $\eps>0$ sufficiently small.
Thus, denoting $F\coloneqq \eta f^{p/2}$, we obtain 
\begin{equation}\label{eq:caccioppoli}
	\begin{split}
	&\sup_{t\in(-R^2,R^2)}\int_{B_R\times(0,R)} z^a |F|^2
+\int_{P_R^+} z^a |\nabla F|^2\lesssim
\frac{p^2}{(R-\rho)^2}\int_{P_R^+} z^a f^p.
\end{split}
\end{equation}
Now, the weighted parabolic Sobolev inequality \cite[Lemma 1.2]{Chiarenza} states that there exists $\chi>1$ such that, denoting 
\begin{equation*}
    d\mu_a\coloneqq \frac{\delta_s}{2}|z|^adX,\quad\text{and}\quad  \fint _{\Omega}f \coloneqq \frac{1}{\int_\Omega d\mu_a}\int_\Omega f d\mu_a
\end{equation*}
we have
\begin{equation}\label{eq:par-sob}
\fint_{P_\rho^+} |V|^{2\chi}
\leq
C\rho^2\left(
\sup_{t\in(-\rho^2,\rho^2)}\fint_{B_\rho^+}|V|^2
\right)^{\chi-1}
\left(
\fint_{P_\rho^+} |\nabla V|^2
\right).
\end{equation}
again with a constant uniform in $s\in (s_0,1)$.
Applying \eqref{eq:par-sob} to $F$ and using \eqref{eq:caccioppoli}, one gets
\begin{equation*}
	\|f\|_{L^{p\chi}(P_\rho^+,d\mu_a)}
\le
\left(\frac{C\,p^2}{(R-\rho)^2}\right)^{1/p}
\|f\|_{L^p(P_R^+,d\mu_a)}.
\end{equation*}
Iterating with radii $\rho_k=\frac12+2^{-k-1}$, $R_k=\frac12+2^{-k}$ and powers $p_{k+1}=\chi p_k$ (with $p_0=1$) one gets 
\begin{equation*}
	\|f\|_{L^\infty(P_{1/2}^+)}
\le C \|f\|_{L^1(P_1^+,d\mu_a)}
\end{equation*}
with a constant uniform in $s\in (s_0,1)$, up to proving the claim. By the fundamental theorem of calculus and Cauchy--Schwarz 
\begin{equation*}
    |w(x,0)|^2\leq 2|w(x,z)|^2+\frac{2}{1-a}z^{1-a}\int_0^zt^a|\partial_t w(x,t)|^2dt.
\end{equation*}
Multiplying by $z^a$, integrating over $z\in (0,\eta)$ and using that 
\begin{equation*}
    \int_0^\eta z\int_0^zg(t)dtdz=\int_0^\eta \frac{\eta^2-t^2}{2}g(t)dt\leq \frac{\eta^2}{2}\int_0^\eta g(t)dt
\end{equation*}
We get 
\begin{equation*}
    |w(x,0)|^2\leq \frac{2(1+a)}{\eta^{1+a}}\int_0^\eta z^a|w(x,z)|^2dz+\frac{1+a}{1-a}\eta^{1-a}\int_0^\eta z^a|\partial_zw(x,z)|^2dz.
\end{equation*}
Integrating over $B_R$ and setting $\eta=R\eps^\frac{1}{2s}$ proves the claim, once we note that $\frac{1+a}{1-a}=\frac{1-s}s\lesssim 1$ and that $\delta_s/(1+a)\to 1$ as $s\nearrow 1$.
\end{proof}
\end{lemma}

\medskip

\noindent\textbf{Acknowledgments.}
The authors are grateful to J.~ Serra for his support during the preparation of this paper. 
MB was supported by the European Research Council under Grant Agreement No 948029. Part of this work was carried out while GC was visiting ETH Z\"urich in Spring 2026.
%Thanks Joaquim 

\bibliography{ref}

@article {Chiarenza,
    AUTHOR = {Chiarenza, Filippo and Serapioni, Raul},
     TITLE = {A remark on a {H}arnack inequality for degenerate parabolic
              equations},
   JOURNAL = {Rend. Sem. Mat. Univ. Padova},
  FJOURNAL = {Rendiconti del Seminario Matematico della Universit\`a di
              Padova. The Mathematical Journal of the University of Padova},
    VOLUME = {73},
      YEAR = {1985},
     PAGES = {179--190},
      ISSN = {0041-8994},
   MRCLASS = {35K65 (35B45)},
  MRNUMBER = {799906},
MRREVIEWER = {Rouben\ Rostamian},
       URL = {http://www.numdam.org/item?id=RSMUP_1985__73__179_0},
}

@article {StingaTorrea,
    AUTHOR = {Stinga, Pablo Ra\'{u}l and Torrea, Jos\'{e} L.},
     TITLE = {Regularity theory and extension problem for fractional
              nonlocal parabolic equations and the master equation},
   JOURNAL = {SIAM J. Math. Anal.},
  FJOURNAL = {SIAM Journal on Mathematical Analysis},
    VOLUME = {49},
      YEAR = {2017},
    NUMBER = {5},
     PAGES = {3893--3924},
      ISSN = {0036-1410,1095-7154},
   MRCLASS = {35R11 (26A33 35B65 35R09 47G20 58J35)},
  MRNUMBER = {3709888},
MRREVIEWER = {Kai\ Diethelm},
       DOI = {10.1137/16M1104317},
       URL = {https://doi.org/10.1137/16M1104317},
}

@article {DaLioRiv1,
    AUTHOR = {Da Lio, Francesca and Rivi\`ere, Tristan},
     TITLE = {Three-term commutator estimates and the regularity of
              {$\frac12$}-harmonic maps into spheres},
   JOURNAL = {Anal. PDE},
  FJOURNAL = {Analysis \& PDE},
    VOLUME = {4},
      YEAR = {2011},
    NUMBER = {1},
     PAGES = {149--190},
      ISSN = {2157-5045,1948-206X},
   MRCLASS = {58E20 (35B65 35J20 35J62 35S05)},
  MRNUMBER = {2783309},
MRREVIEWER = {Fr\'{e}d\'{e}ric\ Robert},
       DOI = {10.2140/apde.2011.4.149},
       URL = {https://doi.org/10.2140/apde.2011.4.149},
}

@article {DaLioRiv2,
    AUTHOR = {Da Lio, Francesca and Rivi\`ere, Tristan},
     TITLE = {Sub-criticality of non-local {S}chr\"{o}dinger systems with
              antisymmetric potentials and applications to half-harmonic
              maps},
   JOURNAL = {Adv. Math.},
  FJOURNAL = {Advances in Mathematics},
    VOLUME = {227},
      YEAR = {2011},
    NUMBER = {3},
     PAGES = {1300--1348},
      ISSN = {0001-8708,1090-2082},
   MRCLASS = {58E20 (35B65 35R11)},
  MRNUMBER = {2799607},
MRREVIEWER = {Andreas\ Gastel},
       DOI = {10.1016/j.aim.2011.03.011},
       URL = {https://doi.org/10.1016/j.aim.2011.03.011},
}

@article {FraserSchoen19,
    AUTHOR = {Fraser, Ailana and Schoen, Richard},
     TITLE = {Shape optimization for the {S}teklov problem in higher
              dimensions},
   JOURNAL = {Adv. Math.},
  FJOURNAL = {Advances in Mathematics},
    VOLUME = {348},
      YEAR = {2019},
     PAGES = {146--162},
      ISSN = {0001-8708,1090-2082},
   MRCLASS = {58J50 (35J25 35P15 35R01)},
  MRNUMBER = {3925929},
MRREVIEWER = {Alexander\ G.\ Losev},
       DOI = {10.1016/j.aim.2019.03.011},
       URL = {https://doi.org/10.1016/j.aim.2019.03.011},
}

@article {FraserSchoen16,
    AUTHOR = {Fraser, Ailana and Schoen, Richard},
     TITLE = {Sharp eigenvalue bounds and minimal surfaces in the ball},
   JOURNAL = {Invent. Math.},
  FJOURNAL = {Inventiones Mathematicae},
    VOLUME = {203},
      YEAR = {2016},
    NUMBER = {3},
     PAGES = {823--890},
      ISSN = {0020-9910,1432-1297},
   MRCLASS = {58C40 (35P15 53A10)},
  MRNUMBER = {3461367},
MRREVIEWER = {Isabel\ M. C. Salavessa},
       DOI = {10.1007/s00222-015-0604-x},
       URL = {https://doi.org/10.1007/s00222-015-0604-x},
}

@article {MillotPegon,
    AUTHOR = {Millot, Vincent and Pegon, Marc},
     TITLE = {Minimizing 1/2-harmonic maps into spheres},
   JOURNAL = {Calc. Var. Partial Differential Equations},
  FJOURNAL = {Calculus of Variations and Partial Differential Equations},
    VOLUME = {59},
      YEAR = {2020},
    NUMBER = {2},
     PAGES = {Paper No. 55, 37},
      ISSN = {0944-2669,1432-0835},
   MRCLASS = {35R11 (35B65 35J47 58E20)},
  MRNUMBER = {4066534},
MRREVIEWER = {Jens\ Wirth},
       DOI = {10.1007/s00526-020-1704-z},
       URL = {https://doi.org/10.1007/s00526-020-1704-z},
}

@article {MillotPegonSchikorra,
    AUTHOR = {Millot, Vincent and Pegon, Marc and Schikorra, Armin},
     TITLE = {Partial regularity for fractional harmonic maps into spheres},
   JOURNAL = {Arch. Ration. Mech. Anal.},
  FJOURNAL = {Archive for Rational Mechanics and Analysis},
    VOLUME = {242},
      YEAR = {2021},
    NUMBER = {2},
     PAGES = {747--825},
      ISSN = {0003-9527,1432-0673},
   MRCLASS = {58E20 (35R11 49N60)},
  MRNUMBER = {4331016},
MRREVIEWER = {Fr\'{e}d\'{e}ric\ Robert},
       DOI = {10.1007/s00205-021-01693-w},
       URL = {https://doi.org/10.1007/s00205-021-01693-w},
}

@article {MillotSire,
    AUTHOR = {Millot, Vincent and Sire, Yannick},
     TITLE = {On a fractional {G}inzburg-{L}andau equation and 1/2-harmonic
              maps into spheres},
   JOURNAL = {Arch. Ration. Mech. Anal.},
  FJOURNAL = {Archive for Rational Mechanics and Analysis},
    VOLUME = {215},
      YEAR = {2015},
    NUMBER = {1},
     PAGES = {125--210},
      ISSN = {0003-9527,1432-0673},
   MRCLASS = {35R11 (35B25 35Q56)},
  MRNUMBER = {3296146},
MRREVIEWER = {Xingbin\ Pan},
       DOI = {10.1007/s00205-014-0776-3},
       URL = {https://doi.org/10.1007/s00205-014-0776-3},
}

@article {MillotSireYu,
    AUTHOR = {Millot, Vincent and Sire, Yannick and Yu, Hui},
     TITLE = {Minimizing fractional harmonic maps on the real line in the
              supercritical regime},
   JOURNAL = {Discrete Contin. Dyn. Syst.},
  FJOURNAL = {Discrete and Continuous Dynamical Systems},
    VOLUME = {38},
      YEAR = {2018},
    NUMBER = {12},
     PAGES = {6195--6214},
      ISSN = {1078-0947,1553-5231},
   MRCLASS = {58E20 (35R11)},
  MRNUMBER = {3917807},
MRREVIEWER = {Christopher\ Steven\ Goodrich},
       DOI = {10.3934/dcds.2018266},
       URL = {https://doi.org/10.3934/dcds.2018266},
}

@article {MillotSireWang,
    AUTHOR = {Millot, Vincent and Sire, Yannick and Wang, Kelei},
     TITLE = {Asymptotics for the fractional {A}llen-{C}ahn equation and
              stationary nonlocal minimal surfaces},
   JOURNAL = {Arch. Ration. Mech. Anal.},
  FJOURNAL = {Archive for Rational Mechanics and Analysis},
    VOLUME = {231},
      YEAR = {2019},
    NUMBER = {2},
     PAGES = {1129--1216},
      ISSN = {0003-9527,1432-0673},
   MRCLASS = {35R11 (26A33 35B25 49Q05)},
  MRNUMBER = {3900821},
MRREVIEWER = {Krzysztof\ Rogowski},
       DOI = {10.1007/s00205-018-1296-3},
       URL = {https://doi.org/10.1007/s00205-018-1296-3},
}

@article {DuzaarSteffen,
    AUTHOR = {Duzaar, Frank and Steffen, Klaus},
     TITLE = {A partial regularity theorem for harmonic maps at a free
              boundary},
   JOURNAL = {Asymptotic Anal.},
  FJOURNAL = {Asymptotic Analysis},
    VOLUME = {2},
      YEAR = {1989},
    NUMBER = {4},
     PAGES = {299--343},
      ISSN = {0921-7134},
   MRCLASS = {58E20},
  MRNUMBER = {1030353},
MRREVIEWER = {Helmut\ Kaul},
}

@article {Hardt-Lin,
    AUTHOR = {Hardt, Robert and Lin, Fang-Hua},
     TITLE = {Partially constrained boundary conditions with energy
              minimizing mappings},
   JOURNAL = {Comm. Pure Appl. Math.},
  FJOURNAL = {Communications on Pure and Applied Mathematics},
    VOLUME = {42},
      YEAR = {1989},
    NUMBER = {3},
     PAGES = {309--334},
      ISSN = {0010-3640,1097-0312},
   MRCLASS = {49F10 (58E15)},
  MRNUMBER = {982353},
MRREVIEWER = {Martin\ Fuchs},
       DOI = {10.1002/cpa.3160420306},
       URL = {https://doi.org/10.1002/cpa.3160420306},
}

@article {SireWeiZheng,
    AUTHOR = {Sire, Yannick and Wei, Juncheng and Zheng, Youquan},
     TITLE = {Infinite time blow-up for half-harmonic map flow from {$\Bbb
              R$} into {$\Bbb S^1$}},
   JOURNAL = {Amer. J. Math.},
  FJOURNAL = {American Journal of Mathematics},
    VOLUME = {143},
      YEAR = {2021},
    NUMBER = {4},
     PAGES = {1261--1335},
      ISSN = {0002-9327,1080-6377},
   MRCLASS = {53E99 (58E20)},
  MRNUMBER = {4291253},
MRREVIEWER = {Abimbola\ Abolarinwa},
       DOI = {10.1353/ajm.2021.0031},
       URL = {https://doi.org/10.1353/ajm.2021.0031},
}

@article {HyderSegattiSireWang,
    AUTHOR = {Hyder, Ali and Segatti, Antonio and Sire, Yannick and Wang,
              Changyou},
     TITLE = {Partial regularity of the heat flow of half-harmonic maps and
              applications to harmonic maps with free boundary},
   JOURNAL = {Comm. Partial Differential Equations},
  FJOURNAL = {Communications in Partial Differential Equations},
    VOLUME = {47},
      YEAR = {2022},
    NUMBER = {9},
     PAGES = {1845--1882},
      ISSN = {0360-5302,1532-4133},
   MRCLASS = {35K05},
  MRNUMBER = {4472924},
       DOI = {10.1080/03605302.2022.2091453},
       URL = {https://doi.org/10.1080/03605302.2022.2091453},
}

@article {CaffarelliSilvestre,
    AUTHOR = {Caffarelli, Luis and Silvestre, Luis},
     TITLE = {An extension problem related to the fractional {L}aplacian},
   JOURNAL = {Comm. Partial Differential Equations},
  FJOURNAL = {Communications in Partial Differential Equations},
    VOLUME = {32},
      YEAR = {2007},
    NUMBER = {7-9},
     PAGES = {1245--1260},
      ISSN = {0360-5302,1532-4133},
   MRCLASS = {35J70},
  MRNUMBER = {2354493},
MRREVIEWER = {Francesco\ Petitta},
       DOI = {10.1080/03605300600987306},
       URL = {https://doi.org/10.1080/03605300600987306},
}

@article {SerraSeMA,
    AUTHOR = {Serra, Joaquim},
     TITLE = {Nonlocal minimal surfaces: recent developments, applications,
              and future directions},
   JOURNAL = {SeMA J.},
  FJOURNAL = {SeMA Journal. Boletin de la Sociedad Espa\~{n}ola de
              Matem\'{a}tica Aplicada},
    VOLUME = {81},
      YEAR = {2024},
    NUMBER = {2},
     PAGES = {165--191},
      ISSN = {2254-3902,2281-7875},
   MRCLASS = {49-02 (49Q05 53A10)},
  MRNUMBER = {4743530},
       DOI = {10.1007/s40324-023-00345-1},
       URL = {https://doi.org/10.1007/s40324-023-00345-1},
}

@article{Badran26,
author = {Badran, Marco},
title = {Harmonic maps to the circle with higher dimensional singular set},
journal = {Proceedings of the London Mathematical Society},
volume = {132},
number = {3},
pages = {e70135},
doi = {https://doi.org/10.1112/plms.70135},
url = {https://londmathsoc.onlinelibrary.wiley.com/doi/abs/10.1112/plms.70135},
eprint = {https://londmathsoc.onlinelibrary.wiley.com/doi/pdf/10.1112/plms.70135},
year = {2026}
}

@misc{CaselliFregugliaPicenni2024,
      title={A nonlocal approximation of the area in codimension two}, 
      author={Caselli, Michele and Freguglia, Mattia and Picenni, Nicola},
      year={2024},
      eprint={2406.13696},
      archivePrefix={arXiv},
      primaryClass={math.DG},
      url={https://arxiv.org/abs/2406.13696}, 
}

@misc{CaselliFregugliaPicenni2026,
      title={Another look at a notion of fractional mass in codimension two}, 
      author={Caselli, Michele and Freguglia, Mattia and Picenni, Nicola},
      year={forthcoming, 2026}, 
}

@article {Struwe88,
    AUTHOR = {Struwe, Michael},
     TITLE = {On the evolution of harmonic maps in higher dimensions},
   JOURNAL = {J. Differential Geom.},
  FJOURNAL = {Journal of Differential Geometry},
    VOLUME = {28},
      YEAR = {1988},
    NUMBER = {3},
     PAGES = {485--502},
      ISSN = {0022-040X,1945-743X},
   MRCLASS = {58E20 (35Bxx 58G11)},
  MRNUMBER = {965226},
MRREVIEWER = {J.\ Eells},
       URL = {http://projecteuclid.org/euclid.jdg/1214442475},
}

@incollection {Schoen84,
    AUTHOR = {Schoen, Richard M.},
     TITLE = {Analytic aspects of the harmonic map problem},
 BOOKTITLE = {Seminar on nonlinear partial differential equations
              ({B}erkeley, {C}alif., 1983)},
    SERIES = {Math. Sci. Res. Inst. Publ.},
    VOLUME = {2},
     PAGES = {321--358},
 PUBLISHER = {Springer, New York},
      YEAR = {1984},
      ISBN = {0-387-96079-1},
   MRCLASS = {58E20},
  MRNUMBER = {765241},
MRREVIEWER = {Helmut\ Kaul},
       DOI = {10.1007/978-1-4612-1110-5\{_}17}

@article {Struwe24,
    AUTHOR = {Struwe, Michael},
     TITLE = {Plateau flow or the heat flow for half-harmonic maps},
   JOURNAL = {Anal. PDE},
  FJOURNAL = {Analysis \& PDE},
    VOLUME = {17},
      YEAR = {2024},
    NUMBER = {4},
     PAGES = {1397--1438},
      ISSN = {2157-5045,1948-206X},
   MRCLASS = {35K55 (35K65 53A10 53E99)},
  MRNUMBER = {4746874},
MRREVIEWER = {David\ James\ Hartley},
       DOI = {10.2140/apde.2024.17.1397},
       URL = {https://doi.org/10.2140/apde.2024.17.1397},
}

@article {Roberts,
    AUTHOR = {Roberts, James},
     TITLE = {A regularity theory for intrinsic minimising fractional
              harmonic maps},
   JOURNAL = {Calc. Var. Partial Differential Equations},
  FJOURNAL = {Calculus of Variations and Partial Differential Equations},
    VOLUME = {57},
      YEAR = {2018},
    NUMBER = {4},
     PAGES = {Paper No. 109, 68},
      ISSN = {0944-2669,1432-0835},
   MRCLASS = {58E20 (35B05 35B65 35J70 35R11 35R35)},
  MRNUMBER = {3817790},
MRREVIEWER = {Armin\ Schikorra},
       DOI = {10.1007/s00526-018-1384-0},
       URL = {https://doi.org/10.1007/s00526-018-1384-0},
}

@article {Moser2011,
    AUTHOR = {Moser, Roger},
     TITLE = {Intrinsic semiharmonic maps},
   JOURNAL = {J. Geom. Anal.},
  FJOURNAL = {Journal of Geometric Analysis},
    VOLUME = {21},
      YEAR = {2011},
    NUMBER = {3},
     PAGES = {588--598},
      ISSN = {1050-6926,1559-002X},
   MRCLASS = {58E20 (35B38 35J50 35R11)},
  MRNUMBER = {2810844},
MRREVIEWER = {Christoph\ Scheven},
       DOI = {10.1007/s12220-010-9159-7},
       URL = {https://doi.org/10.1007/s12220-010-9159-7},
}

@article {RosOton-Serra14,
    AUTHOR = {Ros-Oton, Xavier and Serra, Joaquim},
     TITLE = {The {D}irichlet problem for the fractional {L}aplacian:
              regularity up to the boundary},
   JOURNAL = {J. Math. Pures Appl. (9)},
  FJOURNAL = {Journal de Math\'{e}matiques Pures et Appliqu\'{e}es.
              Neuvi\`eme S\'{e}rie},
    VOLUME = {101},
      YEAR = {2014},
    NUMBER = {3},
     PAGES = {275--302},
      ISSN = {0021-7824,1776-3371},
   MRCLASS = {35R11 (35B65)},
  MRNUMBER = {3168912},
MRREVIEWER = {Kai\ Diethelm},
       DOI = {10.1016/j.matpur.2013.06.003},
       URL = {https://doi.org/10.1016/j.matpur.2013.06.003},
}

@article {Simon14,
    AUTHOR = {Simon, Thomas},
     TITLE = {Comparing {F}r\'{e}chet and positive stable laws},
   JOURNAL = {Electron. J. Probab.},
  FJOURNAL = {Electronic Journal of Probability},
    VOLUME = {19},
      YEAR = {2014},
     PAGES = {no. 16, 25},
      ISSN = {1083-6489},
   MRCLASS = {60E05 (33E12 60E15 60G52 62E15)},
  MRNUMBER = {3164769},
       DOI = {10.1214/EJP.v19-3058},
       URL = {https://doi.org/10.1214/EJP.v19-3058},
}

@article {Mainardi,
    AUTHOR = {Mainardi, Francesco},
     TITLE = {On some properties of the {M}ittag-{L}effler function
              {$E_\alpha(-t^\alpha)$}, completely monotone for {$t>0$}
              with {$0<\alpha<1$}},
   JOURNAL = {Discrete Contin. Dyn. Syst. Ser. B},
  FJOURNAL = {Discrete and Continuous Dynamical Systems. Series B. A Journal
              Bridging Mathematics and Sciences},
    VOLUME = {19},
      YEAR = {2014},
    NUMBER = {7},
     PAGES = {2267--2278},
      ISSN = {1531-3492,1553-524X},
   MRCLASS = {33E12 (26A33)},
  MRNUMBER = {3253257},
       DOI = {10.3934/dcdsb.2014.19.2267},
       URL = {https://doi.org/10.3934/dcdsb.2014.19.2267},
}

@misc{KimNowakSire,
      title={Fine regularity of fractional harmonic maps and applications}, 
      author={Kyeongbae Kim and Simon Nowak and Yannick Sire},
      year={2026},
      eprint={2602.15715},
      archivePrefix={arXiv},
      primaryClass={math.AP},
      url={https://arxiv.org/abs/2602.15715}, 
}

@misc{GengWang,
      title={On the existence of partially smooth solutions to the heat flow of $s$-harmonic maps}, 
      author={Geng, Zhiyuan and Wang, Changyou},
      year={forthcoming, 2026}, 
}
\bibliographystyle{siam}

\end{document}